\documentclass[a4paper,12pt]{article}
\usepackage{amsmath,amsfonts,amsthm,ifsym}
\newtheorem{theorem}{Theorem}[section]
\newtheorem{proposition}[theorem]{Proposition}
\newtheorem{remark}[theorem]{Remark}
\newtheorem{lemma}[theorem]{Lemma}
\begin{document}

\title{KAM tori for the generalized Bejamin-Bona-Mahony equation}
\author{{Guanghua Shi$^a$,  Dongfeng Yan$^b$\footnote{Corresponding author. Email:yand11@fudan.edu.cn}}\\
{\em\small $^a$ College  of Mathematics and Statistics, Hunan Normal University,}\\
{\em\small Changsha, Hunan  410006, China}\\
{\em\small $^b$ School of Mathematics and Statistics, Zhengzhou University,}\\
{\em\small Zhengzhou, Henan, 450001, China}\\
}

\maketitle
\begin{abstract}
A generalized Benjamin-Bona-Mahony (gBBM) equation
\begin{align*}
u_t-u_{xxt}+u_x+u^4u_x=0,
\end{align*}
subject to the periodic boundary condition is studied in this paper. Based on a new infinite dimensional Kolomogorov-Arnold-Moser (KAM) theorem with normal frequencies of finite limit-points, it is shown that the gBBM equation admits plenty of time-quasi-periodic solutions with two frequencies of high modes.
\end{abstract}

{\bf Key words:} gBBM equation; quasi-periodic solutions; KAM theorem.
\section{Introduction and main result}

The Benjamin-Bona-Mahony (BBM) equation
\begin{align}\label{c00}
u_t-u_{xxt}+u_x+uu_x=0,
\end{align}
which is also known as the regularized long-wave equation (RLWE), is initially proposed by Benjamin, Bona and Mahony in 1972 as an improvement of the KdV equation for modeling the long gravity waves of small amplitude in nonlinear dispersive systems \cite{Benjamin1972}. In \cite{Benjamin1972},  Benjamin and his partners established the existence and uniqueness of the global solutions to the intial value problem of the BBM equation. The existence and uniqueness of the periodic solutions for the BBM equation is studied in \cite{Medeiros1977}.
Besides, it is worthwhile to mention that the BBM equation admits only three conservation laws \cite{Olver1979}, while the KdV equation possesses infinite conservation laws.
Subsequently, many research work for the BBM equation were conducted. For instance, the global existence of the BBM equation in arbitrary dimension was established in Ref. \cite{Avrin1985}, while the long time dynamics of the BBM equation was investigated in Ref. \cite{Wang2014}.

We mention that none of the aforementioned papers study the time-quasi-periodic solutions for the BBM equation, in consideration of this, Yuan initially establishes the time-quasi-periodic solutions for the BBM equation (\ref{c00}) based on his new infinite dimensional KAM theorem with normal frequencies of finite limit-points \cite{Yuan2018}. We shall briefly recall the history of the applications of the KAM theory into partial differential equations (PDEs), the key idea of which is to transform the PDEs into an infinite dimensional Hamiltonian system, then the Hamiltonian system is brought into a normal form with an invariant torus by generating iteratively a sequence of symplectic transformations. The pioneering work to study the PDEs in the frame of KAM theory were carried out by Kuksin \cite{Kuksin1989,Kuksin1993,Kuksin2000,Kuksin2004} and Wayne \cite{Wayne1990}, see also P\"oschel \cite{Poschel1996}. These aforemetioned papers concerned with the PDEs of spatial dimension $d=1,$ for the case when the spatial dimension $d\ge 2,$ Bourgain \cite{Bourgain1994,Bourgain1997,Bourgain1998,Bourgain2004,Bourgain2005} managed to develope a new theory which is based on the Newton iteration, Fr\"ohlich-Spencer technique and the semi-algebraic set to study the KAM tori for the PDEs in high spatial dimension, see also Eliasson-Kuksin \cite{Eliasson2010} and Eliasson-Grebert-Kuksin \cite{Eliasson2016}. The existence of KAM tori and quasi-periodic solutions for PDEs with unbounded nonlinearities was developed by Kuksin \cite{Kuksin1998-1}
for the KdV equation (see also \cite{Kappler-Poschel2003}). Later on, based on the improvement of the Kuksin's Lemma (see \cite{Kappler-Poschel2003}), Liu-Yuan \cite{Liu2010,Liu2011}
extended the unbounded KAM theorem to the limiting case, and established the existence of the quasi-periodic solutions for the derivative nonlinear Schr\"odinger equation and the Benjamin-Ono equation. Recently, the existence of the quasi-peridic solutions for the quasi-linear PDEs was studied in \cite{Baldi2014,Baldi2016,Feola2015}.

It should be pointed out that one of the underlying assumptions of the aforementioned KAM theorem is that the normal frequencies $\lambda_j$'s in the normal form part always cluster to infinity. To be specific, there exists some $\kappa\ge 1$ fulfilling 
$$\lambda_j\approx|j|^{\kappa}\to\infty,\quad\text{as}\quad |j|\to\infty.$$
For example, when one considers the nonlinear Schr\"odinger equation
\begin{align}
iu_t-u_{xx}+mu+u^3=0,\quad u(x,0)=u(x,2\pi)=0,
\end{align}
where $m>0$ is a constant, its normal frequencies take the form $\lambda_j=j^2+m\approx j^2$. It is well known that when one constructs the lower dimensional KAM tori, he has to make sure that the first Melnikov's conditions always hold ture, that is, 
$$\Delta_{kj}=\langle k,\omega\rangle+\lambda_j\ne 0,\forall (k,j)\in\mathbb{Z}^{N+1},$$
in which $\omega$ represents the tangential frequency of the integrable normal form part.
By the analyticity of the perturbation term, one could impose some restriction on $k$, say, 
$$|k|\le K=K_{\nu}\approx 2^{\nu},$$
in which $\nu$ represents the step number of the KAM iteration. Specially, when $|j|>CK$ with $C$ larger enough than the norm of the tangential frequency $|\omega|$,
one has
$$|\Delta_{kj}|>|\lambda_j|-|k||\omega|>|j|^2-|\omega|K>1,$$
which simply indicates that the number of the small divisor $\Delta_{kj}$ is finite in the first Melnikov's conditions. However, when one turns to consider the BBM equation,
the normal frequency takes the following form  
$$\lambda_j=\frac{j}{1+j^2}\to 0,\quad\text{as}\quad j\to\infty,$$ 
which means the number of the small divisors $\Delta_{kj}$ is infinite in the first Melnikov's conditions. This is a huge difference between the former KAM theorems and Yuan's KAM theorem with normal frequencies clustering to finite point \cite{Yuan2018}.

This paper is concerned with the generalized Benjamin-Bona-Mahony (gBBM) equation subject to the periodic boundary condition
\begin{align}\label{c01}
u_t-u_{xxt}+u_x+u^4 u_x=0,\quad u(t,0)=u(t,2\pi).
\end{align}
We mention that the decay property and the long-time behavior of the solutions for the generalized BBM equation were investigated in \cite{Albert1989,Biler1992,Fang2008,Naher2012}. The aim of this paper is to study the quasi-periodic solutions for the gBBM equation (\ref{c01}) by means of the KAM method. 

Due to the new infinite dimensional KAM theorem in \cite{Yuan2018}, our main result of the present paper is presented below.
\begin{theorem}\label{t1}
For given $1\ll n_1\ll n_2,$ in the neighborhood of the zero solution $u=0$, the gBBM equation (\ref{c01}) admits plenty of smooth solutions which are quasi-periodic in time, linear stable and of zero
Lyapunov exponent. More precisely, there exists $\epsilon^*=\epsilon^*(n_1,n_2)>0$ such that, for any $\epsilon\in(0,\epsilon^*)$, there is a subset $\tilde{\mathcal{O}}$ of the initial value set $\mathcal{O}_*:=[\epsilon^{\frac{1}{2}},4\epsilon^{\frac{1}{2}}]^2$ with
$$\text{Leb}{\tilde{\mathcal{O}}}=\text{Leb}\mathcal{O}_*\cdot(1-C\frac{1}{|\ln\epsilon|})$$
and for each $\xi\in\tilde{\mathcal{O}}$, the gBBM equation (\ref{c01}) possesses a quasi-periodic solution $u(x,t)$ of frequency $\omega\in\mathbb{R}^2$ in time t, that is,
$$u(t,x)=\sum\limits_{k\in\mathbb{Z}^2,j\in\mathbb{Z}\setminus\{0\}}\hat{u}(k,j)e^{i(k,\omega)t}e^{ijx}$$
fulfilling
\begin{align*}
\begin{split}
|\omega-\omega_0|&\le C\epsilon^{\frac{1}{2}},\quad \omega_0=(\frac{n_1}{1+n_1^2},\frac{n_2}{1+n_2^2}),\\
|\hat{u}(e_l,n_l)-\xi_l|&<C\epsilon^{\frac{5}{8}},\quad l=1,2,\\
\end{split}\end{align*}
and
$$\sum\limits_{(k,j)\not\in\mathcal{q}}|\hat{u}(k,j)|^2 e^{|k|s_0+2a|j|}|j|^{2p}<C\epsilon^{\frac{5}{8}},\quad\mathcal{q}=(e_l,n_l), l=1,2,$$
where $e_l$ denotes the $l-$th unit vector of $\mathbb{Z}^2$, and $s_0>0$, $a>0$, $p>\frac{1}{2}$ are some constants.
\end{theorem}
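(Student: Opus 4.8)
The plan is to recast \eqref{c01} as an infinite-dimensional Hamiltonian system and then verify, hypothesis by hypothesis, that it falls under the KAM theorem of \cite{Yuan2018}. First I would write the equation as
\begin{align*}
u_t=-(1-\partial_x^2)^{-1}\partial_x\Big(u+\tfrac15u^5\Big),
\end{align*}
which is Hamiltonian with Poisson operator $J=-(1-\partial_x^2)^{-1}\partial_x$ and Hamiltonian $H(u)=\int_0^{2\pi}\big(\tfrac12u^2+\tfrac{1}{30}u^6\big)\,dx$. Expanding $u=\sum_j u_je^{ijx}$ under the reality condition $u_{-j}=\bar u_j$ and passing to the complex symplectic coordinates $(q_j,\bar q_j)$ that diagonalize $J$ on the quadratic part, I get $H=\Lambda+G$ with integrable part $\Lambda=\sum_j\lambda_j|q_j|^2$, $\lambda_j=\tfrac{j}{1+j^2}$, and $G$ the sextic term coming from $\int u^6$. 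The structural fact that drives the whole scheme is that, although $u^4u_x$ carries a spatial derivative, the operator $(1-\partial_x^2)^{-1}\partial_x$ is smoothing of order one (in Fourier it is multiplication by $-ij/(1+j^2)=O(1/|j|)$); hence the Hamiltonian vector field $X_G$ gains a derivative, the perturbation is bounded on the analytic phase space with weight $e^{a|j|}|j|^p$, $p>\tfrac12$, and no Kuksin-type unbounded machinery is required.

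Next I would fix the two tangential sites $n_1,n_2$ (with $-n_1,-n_2$ forced by reality), regard the other modes as normal directions, and extract the amplitude--frequency modulation. Since $H$ has no quartic term, this modulation is produced by $G$ at sextic order, so I would perform one step of partial Birkhoff normal form removing the non-resonant sextic monomials and retaining the resonant action-dependent part $\sum_{|\alpha|=3}c_\alpha I^\alpha$, cubic in the actions $I_j=|q_j|^2$. Here one must check that the relevant Birkhoff denominators $\sum_i\pm\lambda_{j_i}$ do not vanish, and that their possible smallness, inherited from $\lambda_j\to0$, is dominated by the exponential decay of the sextic coefficients. After introducing action--angle variables $q_{n_l}=\sqrt{I_l}\,e^{i\theta_l}$ on the tangential torus and scaling the tangential actions to size $\epsilon$ (so that the amplitudes match $\mathcal O_*=[\epsilon^{1/2},4\epsilon^{1/2}]^2$), the normal form gives a tangential frequency map $\omega(\xi)=\omega_0+O(\epsilon^2)$ with $\omega_0=(\lambda_{n_1},\lambda_{n_2})$, and shifted normal frequencies $\Omega_j(\xi)=\lambda_j+O(\epsilon^2)$. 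The key computation is to evaluate the coefficients $c_\alpha$ explicitly and confirm the non-degeneracy (an invertible twist, of size $O(\epsilon)$, or the R\"ussmann condition) demanded by \cite{Yuan2018}; the separation $1\ll n_1\ll n_2$ is exactly what decouples the resonant interactions and keeps the twist determinant from vanishing.

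With the normal form in hand I would check the remaining hypotheses: the asymptotics $\lambda_j=\tfrac{j}{1+j^2}\to0$ show the normal frequencies clustering to finite limit points, and one must verify that the corrections keep $\Omega_j(\xi)$ clustering with uniform Lipschitz control in $\xi$ (the T\"oplitz--Lipschitz/asymptotic condition of \cite{Yuan2018}), along with the analyticity and smallness of the rescaled perturbation. The genuine obstacle, and the reason Yuan's theorem is needed, is the measure estimate for the non-resonance conditions: because $\lambda_j\to0$, infinitely many indices $j$ give small divisors in the first Melnikov condition $\langle k,\omega\rangle+\Omega_j$ and, more seriously, in the second Melnikov conditions $\langle k,\omega\rangle+\Omega_i\pm\Omega_j$, where $\Omega_i\pm\Omega_j$ also accumulates; the usual argument that only finitely many divisors are dangerous at each step therefore fails. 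Here I would lean on the clustering structure, for which $\lambda_j\asymp1/|j|$ for large $|j|$, so the near-resonances accumulate slowly enough that the excised parameter sets can be summed over the time index $k$ and the spatial indices $i,j$ together; carrying this summation out is what produces a resonant complement of measure only $O(1/|\ln\epsilon|)$, rather than a power of $\epsilon$, exactly as recorded in the statement.

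Finally, applying the theorem of \cite{Yuan2018} to the normalized Hamiltonian gives, for each $\xi$ in a subset $\tilde{\mathcal O}\subset\mathcal O_*$ of relative measure $1-C/|\ln\epsilon|$, an analytic invariant $2$-torus carrying a quasi-periodic flow of frequency $\omega$ near $\omega_0$; linear stability and vanishing Lyapunov exponents follow from the reducibility of the normal part built into the theorem. Undoing the Birkhoff transformation, the rescaling and the change back to $u$, while tracking the size of each step, yields the quasi-periodic solution $u(t,x)$ together with the bounds $|\omega-\omega_0|\le C\epsilon^{1/2}$, $|\hat u(e_l,n_l)-\xi_l|<C\epsilon^{5/8}$ and the exponentially weighted tail estimate on the remaining Fourier modes. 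I expect this last bookkeeping to be routine; the hard part throughout is the control of the infinitely many near-resonances created by the accumulation of the normal frequencies at their finite limit points.
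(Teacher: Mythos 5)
Your overall frame (the Hamiltonian formulation with Poisson operator $-(1-\partial_x^2)^{-1}\partial_x$, the smoothing of the vector field by one derivative, tangential sites $\pm n_1,\pm n_2$, extraction of internal parameters from the resonant sextic part, and an appeal to Yuan's KAM theorem) matches the paper, but there are two genuine gaps where your plan would fail. First, you cannot ``remove the non-resonant sextic monomials'' wholesale. With period $2\pi$ there are \emph{exact} resonances among sextic monomials having three or more normal indices: for $(j_1,\dots,j_6)=(1,-2,-2,3,n,-n)$ one has $j_1+\cdots+j_6=0$ and $\lambda_{j_1}+\cdots+\lambda_{j_6}=\tfrac12-\tfrac25-\tfrac25+\tfrac{3}{10}+0=0$, yet $z_1z_{-2}z_{-2}z_3z_nz_{-n}$ is not of action type. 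Your fallback --- that small divisors are ``dominated by the exponential decay of the sextic coefficients'' --- is unavailable: the coefficients $\delta_{j_1}\cdots\delta_{j_6}$ decay only polynomially, and no decay helps against a divisor that vanishes identically. The paper's way out is a genuinely \emph{partial} normal form: only monomials with at most two normal indices (the sets $\Delta_0\cup\Delta_1\cup\Delta_2$) are removed, the divisors for exactly those being bounded below by $C(n_1,n_2)$ in Proposition \ref{p3.1}, while the dangerous part $\hat G$ (at least three normal indices, the set $\Delta_3$) is \emph{kept} and controlled later through $|\hat G|=O(\|z\|_p^3\|\hat z\|_p^3)$.

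Second, a single order-6 step cannot verify Assumption C of Theorem \ref{thmA}, and this is the paper's main technical point. Because the resonant part is sextic rather than quartic, the extracted parameters satisfy $\xi=O(\epsilon^{1/2})$ while the amplitudes are $O(\epsilon^{1/8})$; the order-10 remainder $R$ of the sextic step then has $\|X_R\|=O(\epsilon^{5/4-2b})$ when $\|\hat z\|=O(\epsilon^b)$, which exceeds the required size $\epsilon$ for every admissible $b>\tfrac14$. Even after a second (order-10) normalization one needs simultaneously $\|X_{\hat G}\|=O(\epsilon^{3/8+b})\le\epsilon$ (forcing $b\ge\tfrac58$) and $\|X_T\|=O(\epsilon^{7/4-2b})\le\epsilon$ (forcing $b\le\tfrac38$), a contradiction; this is precisely why the paper pushes to a partial Birkhoff normal form of order 14 (Lemmas \ref{c22-00} and \ref{y8}, Propositions \ref{thm2} and \ref{thm3}) before choosing $r_0=\epsilon^{5/8}$ and checking $\Arrowvert\lfloor X_P\rceil\Arrowvert\le C\epsilon$, $\Arrowvert\lfloor\partial_\xi X_P\rceil\Arrowvert\le C\epsilon^{1/2}$. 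Your bookkeeping signals the problem: you assert $\omega(\xi)=\omega_0+O(\epsilon^2)$, which is inconsistent with the bound $|\omega-\omega_0|\le C\epsilon^{1/2}$ in the statement you are proving (in the paper the frequency shift from $\bar G$ is of size $\xi\sim\epsilon^{1/2}$), and the step you defer as ``routine bookkeeping'' is exactly where the two extra normal-form stages are forced. Finally, note that the measure estimate $O(1/|\ln\epsilon|)$ is imported from \cite{Yuan2018} as a black box via (\ref{c31}); the paper does not (and you need not) re-derive the summation over near-resonances created by $\Omega_j\to0$.
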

\begin{remark}
In the Section 9 of \cite{Yuan2018}, Yuan derives the existence of the $N$-dimensional KAM tori thus quasi-periodic solutions for the BBM equation (\ref{c00}) subject to the periodic boundary condition based on an additional assumption. More precisely, if the spatial varialbe x lies in a closed internal $[0,A]$, then $\frac{2\pi}{A}$ is supposed to be a transcendental number, which is necessary to ensure that the combinations of the frequencies do not vanish and the initial Hamiltonian function can be transformed into the required partial Birkhoff normal form. In the present paper,  we try to drop this assumption, that is, we choose the period to be the ordinary one, say, $A=2\pi$. By choosing suitable admissable set and conducting the partial Birkhoff normal of order 14, we truely obtain the time-quasi-periodic solutions for the gBBM equation (\ref{c01}) with two frequencies of high modes. Unfortunately, we fail to obtain the $N$-dimensional KAM tori and corresponding quasi-periodic solutions.
\end{remark}
\begin{remark}
It should be pointed out that the combinations of the frequencies vanish in many situations when we consider the case $A=2\pi$, say, if $(j_1,j_2,j_3,j_4,j_5,j_6)=(1,-2,-2,3,n,-n)$, then $\lambda_{j_1}+\lambda_{j_2}+\lambda_{j_3}+\lambda_{j_4}+\lambda_{j_5}+\lambda_{j_6}=0.$ So we can not remove the nonresonant terms $z_1z_{-2}z_{-2}z_3z_{n}z_{-n}$ when deriving the Birkhoff normal form. This is the reason why we have to choose some special high modes as the tangential frequencies.
\end{remark}

Let us make some comments. To apply the KAM theorem developed by \cite{Yuan2018}, we firstly turn the gBBM equation (\ref{c01}) into an infinite dimensional Hamiltonian system (\ref{c05}), (\ref{c06}) and (\ref{c07}). Note that the eigenvalue $\lambda_j$ converges to zero, not $+\infty$, which is very different from that of the classical KAM theory. It is worthwhile to point out that since the gBBM equation (\ref{c01}) does not contain any external parameters, it requires to extract the internal parameters $\xi$ from the partial Birkhoff normal form to apply the KAM theorem. To be more specific, we choose $\{z_{\pm n_1},z_{\pm n_2}\}$ as the suitable tangential variables, and the others $\{z_j\}_{j\ne\pm n_1,\pm n_2}$ as the normal ones. It requires to remove those terms with $(j_1,j_2,\cdots,j_6)\in(\Delta_0\cup\Delta_1\cup\Delta_2)\setminus\mathcal{N}$ through a symplectic transformation to obtain the partial Birkhoff normal form of order 6 (for the precise definitions of $\Delta_0,$$\Delta_1$,$\Delta_2$ and $\mathcal{N}$, see Section 3). However, the perturbation term $P=\hat{G}+R$ in the new Hamiltonian (\ref{c12}) fails to satisfy the Assumption C of the KAM theorem in \cite{Yuan2018}, where the size of the Hamiltonian vector field $X_P$ is assumed to be $\epsilon$ and the size of $\partial_\xi X_P$ is assumed to be $\epsilon^{\frac{1}{2}}$. 

We shall make some explanations. Roughly speaking, the perturbation terms satisfy (see Proposition \ref{thm1})
\begin{align*}
R=O(\|z\|^{10}),\quad \hat{G}=O(\|z\|^3\|\hat{z}\|^3),
\end{align*} where $\hat{z}=(z_j)_{j\in\mathbb{N}\setminus\{n_1,n_2\}}.$ Assume that $\|z\|=O(\epsilon^{\frac{1}{8}})$ and $\|\hat{z}\|=O(\epsilon^b)$ with $b>\frac{1}{4}$, then the parameters $\xi$ defined by (\ref{y14}) are of $O(\epsilon^{\frac{1}{2}})$, and the estimations of the corresponding Hamiltonian vector field are given by
\begin{align*}
\|X_{\hat{G}}\|=O(\epsilon^{\frac{3}{8}+b}),\quad \|X_R\|=O(\epsilon^{\frac{5}{4}-2b}).
\end{align*}
Since $b>\frac{1}{4}$, the size of $\|X_{R}\|$ is larger than $\epsilon.$ Therefore, we have to remove some "bad" terms of order 10 with $(j_1,\cdots,j_{10})\in(\Delta_0'\cup\Delta_1')\setminus\mathcal{N}'$ in $R$ (the definitions of $\Delta_0'$,$\Delta_1'$,$\mathcal{N}'$ are given in Section 3). By contrast, in \cite{Gao2012}, the authors only need to eliminate the terms of order 10 with $(j_1,\cdots,j_{10})\in\Delta_0'\setminus\mathcal{N}'$, which is sufficient to check the KAM theorem developed by \cite{Poschel1996}. By the same procedure, one obtains the partial Birkhoff normal form of order 10 in Proposition \ref{thm2}, in view of (\ref{y3}) and (\ref{y4}), we have
\begin{align*}
\|X_{\hat{R}}\|=O(\epsilon),\quad \|X_T\|=O(\epsilon^{\frac{7}{4}-2b})
\end{align*}
If we expect the size of $X_T$ to be less than $\epsilon$, then one has to impose an restriction on $b$, that is, $\frac{7}{4}-2b\ge 1,$ i.e., $b\le \frac{3}{8}$. Unfortunately, under this circumstance, the size of the vector field $X_{\hat{G}}$ is still larger than  $\epsilon,$  which determines that one still needs to 
eliminate the relative large term in $T$ of order 14 with $(j_1,\cdots,j_{14})\in\Delta_0''\setminus\mathcal{N}''$ (the precise definitions of $\Delta_0''$ and $\mathcal{N}''$ are presented in Section 3). Luckily, we remark that the perturbation term appears in the new partial Birkhoff normal form of order 14 fulfills all the assumptions of the KAM theorem in \cite{Yuan2018}, see Propostion \ref{thm3}. Moreover, it is worth mentioning that in the above procedures, one has to guarantee the small divisors 
$|\lambda_{j_1}+\cdots+\lambda_{j_l}|(l=6,10,14)$ do not vanish when $(j_1,\cdots,j_l)$ belongs to some admissible index sets, see Proposition \ref{p3.1}, Lemma \ref{c22-00} and Lemma \ref{y8}.

\section{The Hamiltonian}

After equipped with the symplectic structure $-(1-\partial_{xx})^{-1}\partial_x$ and the model space
$$u\in\mathcal{H}_0^{p_0}:=\{u\in\mathcal{H}^{p_0}(\mathbb{T}:\mathbb{R})|\int_{0}^{2\pi}udx=0\},$$
in which $\mathcal{H}^{p_0}$ is the usual Sobolev space with some $p_0>0$, the gBBM equation (\ref{c01}) can be rewritten as a Hamiltonian system
\begin{align}\label{c02}
u_t=-(1-\partial_{xx})^{-1}\partial_x\nabla_u H(u)
\end{align}
with the Hamiltonian function
\begin{align}\label{c03}
H(u)=\frac{1}{2}\int_{0}^{2\pi} u^2 dx+\frac{1}{30}\int_{0}^{2\pi}u^6dx.
\end{align}

To formulate the statement we need some definitions.
Denote 
$$\bar{\mathbb{Z}}=\mathbb{Z}\setminus\{0\},\quad h_{p_0}:=\{z=(z_j\in\mathbb{C})|\|z\|_p^2=\sum\limits_{j\in\bar{\mathbb{Z}}}|z_j|^2 j^{2p_0}<\infty\}.$$
Define the Fourier transformation $\mathcal{F}:u\to z=(z_j\in\mathbb{C}:j\in\bar{\mathbb{Z}})$ as follows
\begin{align}\label{c04}
u=\sum\limits_{j\in\bar{\mathbb{Z}}}\delta_j z_j\phi_j,\quad \phi_j=\frac{1}{\sqrt{2\pi}}e^{ijx},\quad \delta_j=\sqrt{\frac{|j|}{1+j^2}}.
\end{align}
It is worth noting that $\bar{z}_j=z_{-j}$ iff $u\in\mathbb{R},$ hence $\mathcal{F}$ is isometry from $\mathcal{H}^{p_0}$ to $h_{p}$ with $p=p_0-\frac{1}{2}$,
meanwhile (\ref{c01}) is transformed into a Hamiltonian system with its symplectic structure $-i\sum\limits_{j\ge 1}dz_j\wedge dz_{-j}$:
\begin{align}\label{c05}
i\dot{z}_j=\frac{\partial H}{\partial\bar{z}_j},\quad -i\dot{\bar{z}}_j=\frac{\partial H}{\partial z_j},\quad \bar{z}_j=z_{-j},
\end{align}
where
\begin{align}\label{c06}\begin{split}
H(z,\bar{z})&=\sum\limits_{j\ge 1}\lambda_j z_j z_{-j}+\frac{1}{120\pi^2}\sum_{\substack{j_1+j_2+j_3+j_4+j_5+j_6=0,\\j_1,j_2,j_3,j_4,j_5,j_6\in\bar{\mathbb{Z}}}}\delta_{j_1}\delta_{j_2}\delta_{j_3}\delta_{j_4}\delta_{j_5} \delta_{j_6} z_{j_1} z_{j_2} z_{j_3} z_{j_4} z_{j_5} z_{j_6}\\
&:=\Lambda+G,
\end{split}
\end{align}
and
\begin{align}\label{c07}
\lambda_j=\frac{j}{1+j^2},\quad |\lambda_j|\approx j^{-1},\quad \text{for } j\in\bar{\mathbb{Z}} .
\end{align}
We have the following results.
\begin{lemma}
For each $j\in\bar{\mathbb{Z}}$, if $\bar{z}_j$ represents the complex conjugate of $z_j$, then the Hamiltonian function $H(z,\bar{z})$ is real valued. The Hamiltonian vector field $X_G$ of the perturbation term $G$ is analytic from $h_p$ to $h_q$ with $q=p+1.$ Furthermore, we have
\begin{align}
\|\lfloor X_G\rceil\|_q\le C\|z\|_p^5.
\end{align}
\end{lemma}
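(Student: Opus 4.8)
The plan is to handle the two assertions in turn: the reality of $H$ is essentially algebraic, while the analyticity and the bound on $X_G$ reduce to a single weighted convolution estimate exploiting the smoothing built into the factors $\delta_j\approx|j|^{-1/2}$.

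\textbf{Reality.} Writing $H=\Lambda+G$, I would first note that in $\Lambda=\sum_{j\ge1}\lambda_j z_jz_{-j}$ the reality condition $z_{-j}=\bar z_j$ gives $z_jz_{-j}=|z_j|^2\in\mathbb{R}$, and $\lambda_j\in\mathbb{R}$, so $\Lambda$ is real. For $G$ the cleanest route is to recognize that $G$ is exactly the coordinate expression of $\frac1{30}\int_0^{2\pi}u^6\,dx$ under the transform (\ref{c04}): the prefactor $\frac1{120\pi^2}$ and the constraint $j_1+\cdots+j_6=0$ arise from $\int_0^{2\pi}\phi_{j_1}\cdots\phi_{j_6}\,dx=(2\pi)^{-2}$ when $j_1+\cdots+j_6=0$ and $0$ otherwise, so $G$ is real whenever $u$ is real, i.e. whenever $z_{-j}=\bar z_j$. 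Directly, one may also conjugate the defining sum and reindex $j_k\mapsto -j_k$, which leaves the constraint invariant and, using $\delta_{-j}=\delta_j$ and $\bar z_{j_k}=z_{-j_k}$, returns the same sum, giving $\bar G=G$.

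\textbf{The vector field and its majorant.} From the symplectic form and (\ref{c05}) the $j$-th component of $X_G$ is $(X_G)_j=-i\,\partial G/\partial z_{-j}$. By symmetry of the sextic sum, differentiation in $z_{-j}$ selects the six equal positions and enforces $j_2+\cdots+j_6=j$, so that
\begin{align*}
(X_G)_j=-\frac{i}{20\pi^2}\,\delta_j\!\!\sum_{j_2+\cdots+j_6=j}\!\!\delta_{j_2}\cdots\delta_{j_6}\,z_{j_2}\cdots z_{j_6},
\end{align*}
and the majorant has components $\lfloor X_G\rceil_j=\frac1{20\pi^2}\delta_j\sum_{j_2+\cdots+j_6=j}\delta_{j_2}\cdots\delta_{j_6}|z_{j_2}|\cdots|z_{j_6}|$, namely $\delta_j$ times a five-fold convolution. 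I would then introduce the auxiliary sequence $y_j:=\delta_j|z_j|$; since $\delta_j^2=|j|/(1+j^2)\approx|j|^{-1}$ one checks $\|y\|_{p+\frac12}\approx\|z\|_p$, and $\lfloor X_G\rceil_j=\frac1{20\pi^2}\delta_j\,(y^{*5})_j$. Using $\delta_j|j|^{q}\approx|j|^{p+\frac12}$ with $q=p+1$,
\begin{align*}
\|\lfloor X_G\rceil\|_q\le C\Big(\sum_j|j|^{2(p+\frac12)}(y^{*5})_j^2\Big)^{1/2}=C\,\|y^{*5}\|_{p+\frac12}.
\end{align*}
The decisive ingredient is the Banach-algebra property of $h_s$ under convolution for $s>\tfrac12$ (equivalently, that $H^s(\mathbb T)$ is an algebra): as $p>\tfrac12$ gives $s=p+\tfrac12>\tfrac12$, iterating yields $\|y^{*5}\|_{p+\frac12}\le C\|y\|_{p+\frac12}^5\approx C\|z\|_p^5$, which is the claim. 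Since $X_G$ is a fixed homogeneous polynomial vector field of degree $5$, boundedness from $h_p$ into $h_q$ upgrades at once to analyticity.

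The step that needs genuine care is the bookkeeping of the smoothing weights: one must verify that the single output factor $\delta_j$ together with the five input factors $\delta_{j_k}$ conspire to gain a \emph{full} derivative (from $p$ to $q=p+1$), with half the gain coming from the output $\delta_j$ and the input $\delta$'s absorbed into passing from $z\in h_p$ to $y\in h_{p+\frac12}$, and that the level $s=p+\frac12$ at which the algebra estimate is applied indeed exceeds $\tfrac12$. Equivalently, in the $u$-picture this is the assertion that $-(1-\partial_{xx})^{-1}\partial_x$ is an operator of order $-1$ while $\nabla_u G=\frac15 u^5$ stays in $\mathcal H^{p_0}$ by the algebra property; the convolution computation above is what renders this precise at the level of the majorant norm.
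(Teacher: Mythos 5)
Your proof is correct and takes essentially the same route as the paper's: both estimate the majorant componentwise by $\delta_j$ times a five-fold convolution of the auxiliary sequence $v_j=\delta_j|z_j|$ (your $y$), use $\delta_j\approx|j|^{-1/2}$ to trade the output weight $|j|^q$ for $|j|^{p+\frac12}$, and conclude via the convolution (Banach-algebra) estimate $\|v^{*5}\|_{p+\frac12}\le C\|v\|_{p+\frac12}^5=C\|z\|_p^5$. Your explicit arguments for reality of $H$ and for upgrading the polynomial bound to analyticity simply supply details that the paper dismisses with ``the remaining claims are obvious.''
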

\begin{proof}
Consider $\frac{\partial G}{\partial z_{-j}}$ for $j\in\bar{\mathbb{Z}}$. One has
\begin{align}
\frac{\partial G}{\partial z_{-j}}=6\delta_j\sum\limits_{j_1+j_2+j_3+j_4+j_5=j}\delta_{j_1}\delta_{j_2}\delta_{j_3}\delta_{j_4}\delta_{j_5}z_{j_1}z_{j_2}z_{j_3}z_{j_4}z_{j_5},
\end{align}
then
\begin{align}\begin{split}
|\frac{\partial G}{\partial z_{-j}}|&\le 6\delta_j\sum\limits_{j_1+j_2+j_3+j_4+j_5=j}\delta_{j_1}\delta_{j_2}\delta_{j_3}\delta_{j_4}\delta_{j_5}|z_{j_1}z_{j_2}z_{j_3}z_{j_4}z_{j_5}|\\
&=6\delta_j\cdot(v*v*v*v*v)_j,\\
\end{split}
\end{align}
where $v=(v_j:j\in\bar{\mathbb{Z}})$ with $v_{j_k}=|\delta_{j_k}z_{j_k}| (k=1,2,3,4,5)$ and $v*v$ represents the convolution operation of $v$ and $v$. Recall that $\delta_j\approx j^{-1/2}$, one thus obtains
\begin{align}
\begin{split}
\|\lfloor X_G\rceil\|_q\le \|v*v*v*v*v\|_{q-\frac{1}{2}}\le C\|v\|_{q-\frac{1}{2}}^5=C\|v\|_{p+\frac{1}{2}}^5=C\|z\|_p^5.
\end{split}
\end{align}
The remaining claims are obvious, we omit the details.

\end{proof}

\section{Partial Birkhoff normal form}
We shall use the new KAM theorem in \cite{Yuan2018} to get our desired results, so one has to extract parameters from the six order resonant terms. At the same time, we hope to remove all the six order nonresonant terms to make the perturbation small enough. However, it is difficult to reach. In \cite{Liang2005}, the authors just kill a part of the nonresonant terms with order 6 and obtain a partial Birkhoff normal form. While in \cite{Gao2012}, the authors have to eliminate some nonresonant terms of order 10 so as to avoid all the parameters being excluded. However, here we need to derive a partial Birkhoff normal form of order 14 to apply the new KAM theorem. 
\par
\subsection{Normal form of order six}

Let $S:=\{\pm n_1,\pm n_2|~1\ll n_1\ll n_2\}.$ Split $z=(z_j)_{j\in\mathbb{\bar{Z}}}=(\tilde{z},\hat{z})$ with $\tilde{z}=(z_{n_1},z_{n_2},z_{-n_1},z_{-n_2})$. We define the index sets $\Delta_*$(*=0,1,2) and $\Delta_3$ as follows:
\begin{align*}\begin{split}
\Delta_*&=\{(j_1,j_2,j_3,j_4,j_5,j_6)\in\bar{\mathbb{Z}}^6|\text{ There are just * components of }\\
& \qquad j_1,j_2,j_3,j_4,j_5,j_6 \text{ not in  S}\},\\
\end{split}\end{align*}
\begin{align*}\begin{split}
\Delta_3&=\{(j_1,j_2,j_3,j_4,j_5,j_6)\in\bar{\mathbb{Z}}^6|\text{ There exist at least 3 components of}\\
&\qquad j_1,j_2,j_3,j_4,j_5,j_6\text{ not in S}\}.
\end{split}\end{align*}
We also define the normal formal set in the following form
$$\mathcal{N}:=\{(j_1,j_2,j_3,j_4,j_5,j_6)\in\bar{\mathbb{Z}}^6|(j_1,j_2,j_3,j_4,j_5,j_6)\equiv(a,-a,b,-b,c,-c) \},$$
in which $(j_1,j_2,j_3,j_4,j_5,j_6)=(a,-a,b,-b,c,-c)$ or its possible permutations.
Let us split $G$ into three parts, one has
\begin{align}\label{c07-0}
G=\bar{G}+\tilde{G}+\hat{G},
\end{align}
where $\bar{G}$ represents the normal form part of $G$ fulfilling $(j_1,j_2,j_3,j_4,j_5,j_6)\in(\Delta_0\cup\Delta_1\cup\Delta_2)\cap\mathcal{N}$,
$\tilde{G}$ denotes the non-normal form part of $G$ safisfying $(j_1,j_2,j_3,j_4,j_5,j_6)\in(\Delta_0\cup\Delta_1\cup\Delta_2)\setminus\mathcal{N}$, and $\hat{G}$ represents those terms of $G$ fulfilling $(j_1,j_2,j_3,j_4,j_5,j_6)\in\Delta_3$. Their explicit expressions are listed below respectively,
\begin{align}\label{c07-1}\begin{split}
\bar{G}&=\frac{1}{120\pi^2}\sum\limits_{(j_1,j_2,j_3,j_4,j_5,j_6)\in(\Delta_0\cup\Delta_1\cup\Delta_2)\cap\mathcal{N}}\delta_{j_1}\delta_{j_2}\delta_{j_3}\delta_{j_4}\delta_{j_5}\delta_{j_6} z_{j_1} z_{j_2} z_{j_3} z_{j_4} z_{j_5} z_{j_6}\\
&=\frac{1}{6\pi^2}\biggl[\frac{n_1^3}{(1+n_1^2)^3}|z_{n_1}|^6+\frac{n_2^3}{(1+n_2^2)^3} |z_{n_2}|^6\biggr]\\
\end{split}\end{align}
\begin{align*}\begin{split}
&+\frac{3}{2\pi^2}\biggl[\frac{n_1^2 n_2}{(1+n_1^2)^2(1+n_2^2)} |z_{n_1}|^4|z_{n_2}|^2+\frac{n_1 n_2^2}{(1+n_1^2)(1+n_2^2)^2} |z_{n_1}|^2|z_{n_2}|^4\biggr]\\
&+\frac{3}{2\pi^2}\sum\limits_{j\ge1\text{ and }j\ne n_1,n_2}\frac{j}{1+j^2}\biggl[\frac{n_1^2}{(1+n_1^2)^2}|z_{n_1}|^4|+\frac{n_2^2}{(1+n_2^2)^2}|z_{n_2}|^4\biggr]|z_j|^2\\
&+\frac{6}{\pi^2}\sum\limits_{j\ge1\text{ and }j \ne n_1,n_2}\frac{ n_1 n_2 j}{(1+n_1^2)(1+n_2^2)(1+j^2)} |z_{n_1}|^2|z_{n_2}|^2|z_j|^2.
\end{split}\end{align*}
\begin{align}\label{c07-2}
\tilde{G}=\frac{1}{120\pi^2}\sum\limits_{(j_1,j_2,j_3,j_4,j_5,j_6)\in(\Delta_0\cup\Delta_1\cup\Delta_2)\setminus\mathcal{N}}\delta_{j_1}\delta_{j_2}\delta_{j_3}\delta_{j_4}\delta_{j_5}\delta_{j_6} z_{j_1} z_{j_2} z_{j_3} z_{j_4} z_{j_5} z_{j_6},
\end{align}
\begin{align}\label{c07-3}
\hat{G}=\frac{1}{120\pi^2}\sum\limits_{(j_1,j_2,j_3,j_4,j_5,j_6)\in\Delta_3}\delta_{j_1}\delta_{j_2}\delta_{j_3}\delta_{j_4}\delta_{j_5}\delta_{j_6} z_{j_1} z_{j_2} z_{j_3} z_{j_4} z_{j_5} z_{j_6}.
\end{align}

To remove the part $\tilde{G}$, We need the following proposition.
\begin{proposition}\label{p3.1}
For each $(j_1,j_2,j_3,j_4,j_5,j_6)\in(\Delta_0\cup\Delta_1\cup\Delta_2)\setminus\mathcal{N}$, one has
\begin{align}\label{c07-4}
|\lambda_{j_1}+\lambda_{j_2}+\lambda_{j_3}+\lambda_{j_4}+\lambda_{j_5}+\lambda_{j_6}|\ge C(n_1,n_2),
\end{align}
where $C(n_1,n_2)>0$ is a constant depending only on $n_1$ and $n_2$.
\end{proposition}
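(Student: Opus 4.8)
The plan is to exploit three features of $\lambda_j=j/(1+j^2)$: it is odd ($\lambda_{-j}=-\lambda_j$), strictly decreasing in $|j|$ (hence injective on $\bar{\mathbb Z}$) with $\frac{1}{2|j|}\le|\lambda_j|\le\frac{1}{|j|}$, and, crucially, every monomial of $G$ obeys the momentum relation $j_1+\cdots+j_6=0$, so I may use $\sum_i j_i=0$ throughout. Since at least four of the $j_i$ lie in $S=\{\pm n_1,\pm n_2\}$, I record the signed multiplicities $p=\#\{i:j_i=n_1\}-\#\{i:j_i=-n_1\}$ and $q=\#\{i:j_i=n_2\}-\#\{i:j_i=-n_2\}$, and denote by $\ell_1,\ell_2$ the (none, one, or two) outside indices according to $\Delta_0,\Delta_1,\Delta_2$. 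Setting $\Sigma:=\lambda_{j_1}+\cdots+\lambda_{j_6}$ one has $\Sigma=\mu+\sum_{\text{outside}}\lambda_{\ell_i}$ with $\mu:=p\lambda_{n_1}+q\lambda_{n_2}$ and $|p|+|q|\le 6$, while momentum conservation reads $pn_1+qn_2+\ell_1+\ell_2=0$ (with the obvious truncation in the $\Delta_0,\Delta_1$ cases).

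First I would prove a \emph{balance dichotomy}: membership in $(\Delta_0\cup\Delta_1\cup\Delta_2)\setminus\mathcal N$ together with $\sum_i j_i=0$ forces $(p,q)\ne(0,0)$. For $\Delta_0$ this is immediate, since $\mathcal N$ is precisely the negation-symmetric case $p=q=0$; for $\Delta_1$ the total $S$-multiplicity is $5$, so $p+q$ is odd; for $\Delta_2$ the momentum relation is essential, because $(p,q)=(0,0)$ yields $\ell_1+\ell_2=0$, hence $\ell_2=-\ell_1$, so the tuple is negation-symmetric and lies in $\mathcal N$. Granting $(p,q)\ne(0,0)$, the second ingredient is the two-mode nonresonance estimate $|\mu|=|p\lambda_{n_1}+q\lambda_{n_2}|\ge \frac{1}{2n_2}$, valid for $0<|p|+|q|\le 6$ once $n_2$ is large compared with $n_1$. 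This follows from a dominant-term argument: if $p\ne0$ then $|\mu|\ge\lambda_{n_1}-6\lambda_{n_2}\ge\frac{1}{2n_1}-\frac{6}{n_2}\ge\frac{1}{4n_1}$, while if $p=0$ then $|\mu|\ge\lambda_{n_2}\ge\frac{1}{2n_2}$.

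The outside modes are then treated with the exact identity
\[
\lambda_{\ell_1}+\lambda_{\ell_2}=\frac{(\ell_1+\ell_2)(1+\ell_1\ell_2)}{(1+\ell_1^2)(1+\ell_2^2)},\qquad \ell_1+\ell_2=-(pn_1+qn_2).
\]
In $\Delta_0$ there is nothing to add and $|\Sigma|=|\mu|\ge\frac{1}{2n_2}$. In $\Delta_1$ momentum pins the single outside index to $\ell=-(pn_1+qn_2)$; since $\ell\notin S$ forces $|p|\ge2$ when $q=0$ and $|q|\ge2$ when $p=0$, one gets $|\ell|\ge n_1$, whence $|\lambda_\ell|\le 1/|\ell|$ is strictly smaller than $\frac12|\mu|$ and $|\Sigma|\ge\frac12|\mu|$ is of order $1/n_2$ (the degenerate sub-cases $q=0$ and $p=0$ being computed directly from the identity, e.g. $\Sigma=qn_2(\frac{1}{1+n_2^2}-\frac{1}{1+q^2n_2^2})$ when $p=0$). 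This already establishes (\ref{c07-4}) on $\Delta_0\cup\Delta_1$.

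The main obstacle is $\Delta_2$, where two indices are free. Fixing $(p,q)$ and eliminating $\ell_2=-(pn_1+qn_2)-\ell_1$, the quantity $\Sigma=\Sigma(\ell_1)$ is a rational function of the single integer $\ell_1$ with $\Sigma(\ell_1)\to\mu\ne0$ as $|\ell_1|\to\infty$; hence there is $R=R(n_1,n_2)$ with $|\Sigma(\ell_1)|\ge\frac12|\mu|\ge\frac{1}{4n_2}$ for $|\ell_1|>R$, and only finitely many integers $\ell_1$ with $|\ell_1|\le R$ remain. On this finite window each $\Sigma(\ell_1)$ is a rational number with denominator dividing $(1+n_1^2)(1+n_2^2)(1+\ell_1^2)(1+\ell_2^2)$, so it is either $0$ or at least the reciprocal of that denominator; taking $C(n_1,n_2)$ to be the minimum of $\frac{1}{4n_2}$ and these finitely many positive values gives the claim. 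The genuine difficulty is therefore the exact non-vanishing $\Sigma(\ell_1)\ne0$ on the finite window, precisely the phenomenon illustrated by the Remark's combination $(1,-2,-2,3,n,-n)$. A short computation (for instance with four equal-sign high modes and $\ell_1,\ell_2$ placed symmetrically about a high mode) shows that such sporadic resonances are governed by Pell-type constraints such as $k^2=n_1^2-3$, i.e. $(n_1-k)(n_1+k)=3$, which force $n_1=2$ and have no solution once $n_1$ is large. I would thus close this case by enumerating the finitely many shapes of an unbalanced four-high-mode $S$-part, and ruling out each corresponding resonance using the separation $1\ll n_1\ll n_2$ (equivalently, restricting $(n_1,n_2)$ to the admissible set). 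Verifying that no unbalanced four-high-mode configuration resonates with two free modes is where essentially all the work lies.
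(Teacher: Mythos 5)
Your reduction scheme is sound as far as it goes, and on $\Delta_0\cup\Delta_1$ it is complete and arguably tidier than the paper's treatment: the balance dichotomy $(p,q)\neq(0,0)$ is correct (in particular your use of momentum in the $\Delta_2$ case to force $\ell_2=-\ell_1$ and hence membership in $\mathcal{N}$ is exactly right, since with values in $\{\pm n_1,\pm n_2\}$ the conditions $p=q=0$ do make the $S$-part negation-symmetric as a multiset), the two-mode bound $|p\lambda_{n_1}+q\lambda_{n_2}|\ge \frac{1}{2n_2}$ is correct for $0<|p|+|q|\le 6$ and $n_1\ll n_2$, and the compactness step for $\Delta_2$ --- $\Sigma(\ell_1)\to\mu\neq 0$ outside a finite window, rational values with bounded denominator inside it --- validly reduces the proposition to finitely many \emph{exact non-vanishing} statements, with a constant that is admissible because it may depend on $n_1,n_2$.

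But that is precisely where your proof stops, and it is where essentially all of the paper's proof lives. You state yourself that the non-vanishing of $\Sigma(\ell_1)$ on the finite window ``is where essentially all the work lies,'' and you offer only one illustrative Pell-type shape ($k^2=n_1^2-3$) rather than the enumeration. The paper carries this out: in its Subcase (II-1-1) (four high modes with $b+d=4n_1$) vanishing of the divisor is equivalent to the quadratic $x^2-(3+n_1^2)x+15n_1^2=0$ having an integer root $x=bd$, which is excluded because the discriminant $(n_1^2-27)^2-720$ cannot be a perfect square once $n_1\ge 20$; in Subcases (II-1-2) and (II-2-1) the relevant quadratic is $rn_1x^2-(1+n_1^2)x+rn_1=0$ with discriminant $(n_1^2-(2r^2-1))^2-(2r^2-1)^2+1$, again not a perfect square for $n_1\gg 1$, supplemented by the sign factorization $(b-n_1)(n_1b-1)<0$ when $r=1$. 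Moreover, for the balanced sign patterns (e.g.\ $j_1,j_4>0$, $j_2,j_3<0$ with $j_1+j_4=|j_2|+|j_3|$, which arise inside your window when the two free modes straddle the high modes, including the paper's Case (I) configurations that your parametrization subsumes), non-vanishing is obtained not from denominator bounds but from strict convexity of $f(t)=t/(1+t^2)$, via second-difference estimates such as $|f(j_1)-2f(|j_2|)+f(2|j_2|-j_1)|\ge |f''(2n_2)|$, with a separate monotonicity argument for the boundary case $j_1=1$. None of these arguments --- discriminant-not-a-square, sign factorization, convexity --- is actually executed in your proposal; they appear only as an intention. So the proof is genuinely incomplete on $\Delta_2$, not merely compressed: to close it along your route you would need to classify the window configurations by $(p,q)$ and the signs of $\ell_1,\ell_2$, and supply for each class an argument of one of these two types.
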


\begin{proof}
We shall discuss the proposition into several cases.
\begin{description}
\item Case(I). When $j_5+j_6=0$.
\end{description}
At this time one has $$|\lambda_{j_1}+\lambda_{j_2}+\lambda_{j_3}+\lambda_{j_4}+\lambda_{j_5}+\lambda_{j_6}|=|\lambda_{j_1}+\lambda_{j_2}+\lambda_{j_3}+\lambda_{j_4}|.$$
\begin{description}
\item Case(I-1). $j_5, j_6\in\{\pm n_1,\pm n_2\}.$
\end{description}
\begin{itemize}
\item Three components of $j_1,j_2,j_3,j_4$ are negative, the other one is positive. Without loss of generality, we assume $j_1>0$ and $j_2,j_3,j_4<0$.\par
From the fact that $j_1+j_2+j_3+j_4=0$ one can deduce $j_1=|j_2|+|j_3|+|j_4|$. Since the function $f(t)=\frac{t}{1+t^2}$ is monotone decreasing, then
\begin{align}\label{c08}\begin{split}
|\lambda_{j_1}+\lambda_{j_2}+\lambda_{j_3}+\lambda_{j_4}|&=\frac{|j_2|}{1+j_2^2}+\frac{|j_3|}{1+j_3^2}+\frac{|j_4|}{1+j_4^2}-\frac{j_1}{1+j_1^2}\\
&\ge\frac{2n_2}{1+n_2^2}.
\end{split}\end{align}

\item Two components of $j_1,j_2,j_3,j_4$ are negative, while the remaining ones are positive. Without loss of generality, one assumes that $j_1,j_4>0,j_2,j_3<0$ and $j_1\le |j_2|\le|j_3|\le j_4.$\par
It is easily check that the function $f(t)=\frac{t}{1+t^2}$ is convex for $t\ge 2.$ Then we shall discuss it into two subcases. On one hand, when $j_1\geq2,$ it follows that
\begin{align}\label{c10}\begin{split}
&|\lambda_{j_1}+\lambda_{j_2}+\lambda_{j_3}+\lambda_{j_4}|\\
=&|f(j_1)+f(j_4)-f(|j_2|)-f(|j_3|)|\\
=&|f(j_1)-f(|j_2|)-(f(|j_3|)-f(j_4))|\\
\ge&| f(j_1)-f(|j_2|)-(f(|j_2|)-f(j_4+|j_3|-|j_2|))|\\
=&|f(j_1)-2f(|j_2|)+f(2|j_2|-j_1)|\\
\ge& |f''(2n_2)|:=C(n_1,n_2),
\end{split}\end{align}
the last inequality follows from the fact that $j_1\le n_2.$

On the other hand, if $j_1=1$, one has $1+j_4=|j_2|+|j_3|.$ Notice that at least two components of $j_2,j_3,j_4$ lie in $S,$ it deduces that $|j_2|,|j_3|,j_4\geq n_1$. Thus,
\begin{equation}\label{c09}
  |\lambda_{j_1}+\lambda_{j_2}+\lambda_{j_3}+\lambda_{j_4}|=\frac{1}{2}+O(\frac{1}{n_1})\geq\frac{1}{4}.
\end{equation}
\end{itemize}
\begin{description}
\item Case(I-2). $j_5,j_6\notin\{\pm n_1,\pm n_2\}.$
\end{description}
This case is trivial.
\begin{description}
\item Case (II). When $j_5+j_6\ne 0.$
	\end{description}
\begin{itemize}
\item All the components of $j_1,j_2,j_3,j_4,j_5,j_6$ belong to $S$.\par
At this time, $j_1+j_2+j_3+j_4+j_5+j_6=rn_1-(6-r)n_2=0,$ where $1\le r\le 5.$ Thus
$$|\lambda_{j_1}+\lambda_{j_2}+\lambda_{j_3}+\lambda_{j_4}+\lambda_{j_5}+\lambda_{j_6}|=\frac{rn_1}{1+n_1^2}-\frac{(6-r)n_2}{1+n^2_2}>C(n_1,n_2)$$
holds true taking account of $n_1\ll n_2.$
\item Only one component of $j_1,j_2,j_3,j_4,j_5,j_6$ lies outside of $S$. \par
Without loss of generality, suppose that the component $s$ lies outside of $S,$ then one finds that
$rn_1\pm(5-r)n_2+j_6=0$ with $0\le r\le 5.$ If $r=0$ or $r=5$, we conclude it obviously, otherwise we have $j_6>\frac{n_2}{2}$. Therefore,
\begin{align}\begin{split}\label{c11}
&|\lambda_{j_1}+\lambda_{j_2}+\lambda_{j_3}+\lambda_{j_4}+\lambda_{j_5}+\lambda_{j_6}|\\
=&\big|\frac{rn_1}{1+n_1^2}-\frac{(5-r)n_2}{1+n_2^2}+\frac{j_6}{1+j_6^2}\big|\\
>&\frac{rn_1}{2(1+n_1^2)},
\end{split}\end{align}
where we have used the fact that $n_1\ll n_2$ again.
\item Two components of $j_1,j_2,j_3,j_4,j_5,j_6$ lie outside of $S$.\par
Without loss of generality, suppose that $j_5,j_6\not\in S$, $j_1,j_2,j_3,j_4\in S$ and $|j_5|\leq |j_6|.$ Then one has
$$rn_1\pm(4-r)n_2+j_5+j_6=0$$ with $0\le r\le 4,$ denote $b=|j_5|, d=|j_6|.$
\begin{description}
\item Subcase(II-1). $rn_1+(4-r)n_2+j_5+j_6=0$ with $0\leq r\leq4.$\\
It is clear that $j_5+j_6<0.$ Since $|j_5|\leq |j_6|,$ it happens $j_5>0, j_6<0$ or $j_5<0, j_6<0$. For the former case, we adopt the similar method as that in the first subcase of Case(I-1). While for the latter case we have $rn_1+(4-r)n_2=b+d.$ It is divided into two subcases to discuss. \\
Subcase(II-1-1). $rn_1+(4-r)n_2=b+d$ with $r=0\text{ or }4.$\\
We only consider the case with $r=4$, the case with $r=0$ can be handled similarly.
In this case, one has $4n_1=d+b,$ and
\begin{align}\begin{split}
&\big|\lambda_{j_1}+\lambda_{j_2}+\lambda_{j_3}+\lambda_{j_4}+\lambda_{j_5}+\lambda_{j_6}\big|\\
=&\big|\frac{4n_1}{1+n_1^2}-\frac{d}{1+d^2}-\frac{b}{1+b^2}\big|\\
=&4n_1\big|\frac{b^2d^2+(b+d)^2-(3+n_1^2)bd-n_1^2}{(1+n_1^2)(1+d^2)(1+b^2)}\big|\\
=&4n_1\frac{|(bd)^2-(3+n_1^2)bd+15n_1^2|}{(1+n_1^2)(1+d^2)(1+b^2)}.
\end{split}\end{align}
It remains to show that $(bd)^2-(3+n_1^2)bd+15n_1^2\ne 0,$ which turns to prove that the equation $x^2-(3+n_1^2)x+15n_1^2=0$ does not admit
integer solutions. Actually, when $n_1\ge 20,$ the discriminant of equation $x^2-(3+n_1^2)x+15n_1^2=0$ takes the form $\Delta=(n_1^2-27)^2-720$,
which cannot be a square of some integer. Otherwise, one has
$$\Gamma\le (n_1^2-27-1)^2,$$ which infers $n_1^2\le 366+27$ and $n_1\le 19,$ this leads to contradiction. Note that $b, d<4n_1$, therefore, it implies
$$\big|\lambda_{j_1}+\lambda_{j_2}+\lambda_{j_3}+\lambda_{j_4}+\lambda_{j_5}+\lambda_{j_6}\big|\ge\frac{4n_1}{(1+n_1^2)(1+16n_1^2)^2}.$$
Subcase(II-1-2). $rn_1+(4-r)n_2=b+d$ with $ 1\leq r\leq 3$.

When $b\geq rn_1,$ one has $rn_1\le b\le d\le(4-r)n_2,$ hence
\begin{align*}\begin{split}
&|\lambda_{j_1}+\lambda_{j_2}+\lambda_{j_3}+\lambda_{j_4}+\lambda_{j_5}+\lambda_{j_6}|\\
=&\big|\frac{rn_1}{1+n_1^2}+\frac{(4-r)n_2}{1+n_2^2}-\frac{d}{1+d^2}-\frac{b}{1+b^2}\big|\\
\ge& \frac{rn_1}{1+n_1^2}+\frac{(4-r)n_2}{1+n_2^2}-\frac{d}{1+d^2}-\frac{b}{1+b^2}\\
= &\frac{rn_1}{1+n_1^2}-f(rn_1)+\frac{(4-r)n_2}{1+n_2^2}-f((4-r)n_2)+f(rn_1)\\
&\qquad+f((4-r)n_2)-f(b)-f(d)\\
=&\frac{rn_1^3(r^2-1)}{(1+n_1^2)(1+r^2 n_1^2)}+\frac{(4-r)n_2^3[(4-r)^2-1]}{(1+n_2^2)[1+(4-r)^2 n_2^2]}+f''(\theta)(rn_1-b)^2\\
>&\frac{rn_1^3(r^2-1)}{(1+n_1^2)(1+r^2 n_1^2)}+\frac{(4-r)n_2^3[(4-r)^2-1]}{(1+n_2^2)[1+(4-r)^2 n_2^2]}\\
:=&C(n_1,n_2)>0.
\end{split}\end{align*}
When $b<rn_1$, then one obtains $b<rn_1<(4-r)n_2<d.$ It follows that
\begin{align*}\begin{split}
&\big|\lambda_{j_1}+\lambda_{j_2}+\lambda_{j_3}+\lambda_{j_4}+\lambda_{j_5}+\lambda_{j_6}\big|\\
=&\big|\frac{b}{1+b^2}-\frac{rn_1}{1+n_1^2}-[\frac{(4-r)n_2}{1+n_2^2}-\frac{d}{1+d^2}]\big|\\
\geq&\big|\frac{rn_1b^2-(1+n_1^2)b+rn_1}{(1+b^2)(1+n_1^2)}\big|-\big|\frac{(4-r)n_2}{1+n_2^2}-\frac{d}{1+d^2}\big|\\
:=&I_1-I_2.
\end{split}\end{align*}
We claim that $rn_1b^2-(1+n_1^2)b+rn_1\ne 0.$ In fact, if $r=1$, then $b<n_1$ and
\begin{equation*}
  rn_1b^2-(1+n_1^2)b+rn_1=(b-n_1)(n_1b-1)<0.
\end{equation*}
If $2\le r\le 3,$ we consider the equation $rn_1x^2-(1+n_1^2)x+rn_1=0.$ The corresponding discriminant $\Delta=(n_1^2-(2r^2-1))^2-(2r^2-1)^2+1$ cannot be a square of some integer as $n_1\gg1$. Thus, by the fact $b<rn_1$, one gets
\begin{equation*}
  I_1\geq\frac{1}{(1+r^2n_1^2)^2}.
\end{equation*}
Since $d>(4-r)n_2$, it is easily know that $I_2=O(\frac{1}{n_2})$. Taking account of $n_1\ll n_2,$ we conclude that
$$|\lambda_{j_1}+\lambda_{j_2}+\lambda_{j_3}+\lambda_{j_4}+\lambda_{j_5}+\lambda_{j_6}|\ge \frac{1}{2(1+r^2n_1^2)^2}.$$


\item Subcase(II-2). $rn_1-(4-r)n_2+j_5+j_6=0$ with $0\leq r\leq4.$\par
Subcase(II-2-1). $rn_1-(4-r)n_2+j_5+j_6=0$ with $1\leq r\leq3$.\\
In this case, $s$ must be positive. If $j_5>0$, then one obtains
\begin{align*}\begin{split}
&|\lambda_{j_1}+\lambda_{j_2}+\lambda_{j_3}+\lambda_{j_4}+\lambda_{j_5}+\lambda_{j_6}|\\
=&|\frac{rn_1}{1+n_1^2}+\frac{b}{1+b^2}+\frac{d}{1+d^2}-\frac{(4-r)n_2}{1+n_2^2}|\\
\ge& \frac{rn_1}{2(1+n_1^2)}:=C(n_1,n_2)>0.
\end{split}\end{align*}
While, for $j_5<0,$ we have $rn_1+d=(4-r)n_2+b.$

Similarly, we firstly discuss the case $b>rn_1.$ It is easy to know $rn_1<b\le(4-r)n_2<d.$ Then
\begin{align*}\begin{split}
&|\lambda_{j_1}+\lambda_{j_2}+\lambda_{j_3}+\lambda_{j_4}+\lambda_{j_5}+\lambda_{j_6}|\\
=&|\frac{rn_1}{1+n_1^2}+\frac{d}{1+d^2}-\frac{b}{1+b^2}-\frac{(4-r)n_2}{1+n_2^2}|\\
\geq&rf(n_1)-f(rn_1+1)-(4-r)f(n_2)+f((4-r)n_2+1)\\
&\qquad +f(rn_1+1)-f(b)-f((4-r)n_2+1)+f(d)\\
=&rf(n_1)-f(rn_1+1)-(4-r)f(n_2)+f((4-r)n_2+1)\\
&\qquad+f''(\theta)(rn_1+1-b)^2,
\end{split}\end{align*}
where $\theta\in (rn_1+1,d)$. Due to the monotonicity of the function $f(x)$, we obtain that
\begin{equation*}
  rf(n_1)-f(rn_1+1)\geq f(n_1)-f(n_1+1)\geq\frac{1}{1+(n_1+1)^2}.
\end{equation*}
Since $f((4-r)n_2+1)-(4-r)f(n_2)=O(\frac{1}{n_2})$ and $f''(\theta)\ge0,$
hence, $$|\lambda_{j_1}+\lambda_{j_2}+\lambda_{j_3}+\lambda_{j_4}+\lambda_{j_5}+\lambda_{j_6}|\ge \frac{1}{2+2(n_1+1)^2}.$$
When $b\le rn_1,$ one has $b\le rn_1\le d\le(4-r)n_2.$ And it follows that
\begin{align*}\begin{split}
&\big|\lambda_{j_1}+\lambda_{j_2}+\lambda_{j_3}+\lambda_{j_4}+\lambda_{j_5}+\lambda_{j_6}\big|\\
=&\big|\frac{rn_1(1+b^2)-b(1+n_1^2)}{(1+n_1^2)(1+b^2)}+\frac{d(1+n_2^2)-(4-r)n_2(1+d^2)}{(1+d^2)(1+n_2^2)}\big|\\
:=&|I_3+I_4|.
\end{split}\end{align*}
If $b<rn_1$ or $2\le r\le 3$, by the same method as that in the Subcase(II-1-1) one can show  $rn_1(1+b^2)-b(1+n_1^2)\ne 0.$ In view of $d=(4-r)n_2+b-rn_1$ and $n_1\ll n_2,$ we know that $d>\frac{n_2}{2}$ and $I_4=O(\frac{1}{n_2})$. So it deduces that
 $$|\lambda_{j_1}+\lambda_{j_2}+\lambda_{j_3}+\lambda_{j_4}+\lambda_{j_5}+\lambda_{j_6}|\ge|I_3|-|I_4|>\frac{1}{2(1+r^2n_1^2)^2}.$$
If $b=rn_1$ and $r=1$, then $d=3n_2$. Now it is clear that  $$|\lambda_{j_1}+\lambda_{j_2}+\lambda_{j_3}+\lambda_{j_4}+\lambda_{j_5}+\lambda_{j_6}|=|I_4|=\frac{24n_2^3}{(1+n_2^2)(1+9n_2^2).}$$
Subcase (II-2-2). $rn_1-(4-r)n_2+b+d=0$ with $r=0,r=4.$\par
It is similar with the case (II-1-2).
	\end{description}
This completes the proof.
\end{itemize}
\end{proof}

Based on the proposition \ref{p3.1}, we obtain the following partial Birkhoff normal form proposition.
\begin{proposition}\label{thm1}
There exists a real analytic symplectic coordinate transformation $\Phi_1$ which maps the neighborhood of the origin of $h_p$ to $h_q$, such that
the Hamiltonian H is transformed into a partial Birkhoff normal form up to order six. More precisely, one has
\begin{align}\label{c12}
H\circ\Phi_1=\Lambda+\bar{G}+\hat{G}+R,
\end{align}
where
\begin{align}\label{c15}
 |\hat{G}|=O(\|z\|_p^{3}\|\hat{z}\|_p^{3})\text{ and }|R|=O(\|z\|_p^{10}).
\end{align}
\end{proposition}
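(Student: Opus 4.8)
The plan is to realize $\Phi_1$ as the time-one map $X_F^{t}\big|_{t=1}$ of the Hamiltonian flow generated by a function $F$ homogeneous of degree six in $z$, chosen so as to cancel exactly the non-normal sextic part $\tilde{G}$ in the splitting (\ref{c07-0}). Expanding in a Lie series,
\begin{align*}
H\circ\Phi_1=\Lambda+G+\{\Lambda,F\}+\{G,F\}+\frac{1}{2}\{\{\Lambda,F\},F\}+\cdots,
\end{align*}
I would impose the homological equation
\begin{align*}
\{\Lambda,F\}+\tilde{G}=0 .
\end{align*}
Since the flow of $X_\Lambda$ acts on a monomial by $z_{j_1}\cdots z_{j_6}\mapsto e^{-i(\lambda_{j_1}+\cdots+\lambda_{j_6})t}z_{j_1}\cdots z_{j_6}$, the operator $\{\Lambda,\cdot\}$ is diagonal, and the homological equation is solved term by term by
\begin{align*}
F=\frac{1}{120\pi^2}\sum_{(j_1,\ldots,j_6)\in(\Delta_0\cup\Delta_1\cup\Delta_2)\setminus\mathcal{N}}\frac{\delta_{j_1}\cdots\delta_{j_6}}{i(\lambda_{j_1}+\cdots+\lambda_{j_6})}\,z_{j_1}\cdots z_{j_6}.
\end{align*}

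The point that makes $F$ well defined is precisely Proposition \ref{p3.1}: every denominator $\lambda_{j_1}+\cdots+\lambda_{j_6}$ is bounded below by $C(n_1,n_2)>0$ on the index set $(\Delta_0\cup\Delta_1\cup\Delta_2)\setminus\mathcal{N}$. Hence the coefficients of $F$ differ from those of $\tilde{G}$ only by the bounded factor $1/\big(i(\lambda_{j_1}+\cdots+\lambda_{j_6})\big)$, so $F$ has exactly the same algebraic structure as $G$. Repeating the computation in the Lemma of Section 2 verbatim then gives $\|\lfloor X_F\rceil\|_{q}\le C(n_1,n_2)^{-1}\|z\|_p^5$, so $X_F$ is real analytic from a neighborhood of the origin in $h_p$ into $h_q$. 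Being quintic, $X_F$ has a flow $X_F^t$ defined for $t\in[0,1]$ on a possibly smaller neighborhood, and as the flow of a real Hamiltonian vector field it is symplectic and real analytic; this furnishes the near-identity transformation $\Phi_1$, whose nonlinear part is smoothing and maps into $h_q$.

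By the choice of $F$ we have $G+\{\Lambda,F\}=\bar{G}+\hat{G}$, so the leading part of $H\circ\Phi_1$ is exactly $\Lambda+\bar{G}+\hat{G}$, while the tail
\begin{align*}
R=\int_0^1(1-t)\,\big(\{\{\Lambda,F\},F\}\circ X_F^t\big)\,dt+\int_0^1\big(\{G,F\}\circ X_F^t\big)\,dt
\end{align*}
collects all iterated brackets that are at least quadratic in $F$. The degree count is the key: $F$ and the sextic $G$ are both homogeneous of degree six, and each Poisson bracket lowers the total degree by two, so every surviving monomial has degree at least $6+6-2=10$; together with the tame analytic estimates for the vector fields this gives $|R|=O(\|z\|_p^{10})$. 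The bound $|\hat{G}|=O(\|z\|_p^3\|\hat{z}\|_p^3)$ is read off directly from (\ref{c07-3}): each monomial there lies in $\Delta_3$, hence carries at least three indices outside $S$, i.e. at least three factors drawn from $\hat{z}$, which we estimate by $\|\hat{z}\|_p$ and the rest by $\|z\|_p$.

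The genuinely hard step is not in this proof but has been isolated beforehand as Proposition \ref{p3.1}: because the normal frequencies $\lambda_j=j/(1+j^2)$ cluster at zero rather than tending to infinity, the small divisors $\lambda_{j_1}+\cdots+\lambda_{j_6}$ cannot be bounded below by the usual ``only finitely many resonances'' argument, and a careful case analysis over $(\Delta_0\cup\Delta_1\cup\Delta_2)\setminus\mathcal{N}$ is needed. Granting that lower bound, everything above is routine bookkeeping. One should only note that, since the divisors are merely bounded below and do not grow, $X_F$ gains regularity solely through the $\delta_j$ factors, with no extra smoothing; this is exactly why the order-six normalization does not already meet the KAM assumptions and why the higher-order steps (Propositions \ref{thm2} and \ref{thm3}) become necessary.
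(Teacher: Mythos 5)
Your proof is correct and follows essentially the same route as the paper: the same generating function $F$ solving the homological equation $\{\Lambda,F\}+\tilde{G}=0$ (well defined thanks to Proposition \ref{p3.1}), the same convolution estimate $\|\lfloor X_F\rceil\|_q\le C(n_1,n_2)^{-1}\|z\|_p^5$ giving analyticity of $\Phi_1=X_F^1$ from $h_p$ to $h_q$, and the same Taylor/Lie remainder of order $10$ --- your two-integral expression for $R$ is exactly the paper's single integral $\int_0^1\{\bar{G}+\hat{G}+t\tilde{G},F\}\circ X_F^t\,dt$ before the substitution $\{\Lambda,F\}=-\tilde{G}$ is used to combine terms. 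The only slip is verbal (the term $\{G,F\}$ is linear, not ``at least quadratic,'' in $F$), but the degree count $6+6-2=10$ you actually invoke is the correct and decisive one.
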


\begin{proof}
Let $\Phi_1=X_F^1$ be the time-1 map of the Hamiltonian flow of vector field $X_F$ defined by the Hamiltonian
$$F=\sum\limits_{(j_1,j_2,j_3,j_4,j_5,j_6)\in\bar{\mathbb{Z}}^6} F_{j_1j_2j_3j_4j_5j_6}z_{j_1} z_{j_2} z_{j_3} z_{j_4} z_{j_5} z_{j_6}$$
with the coefficients
\begin{align*}
iF_{jklmns}=\left\{\begin{array}{cc}
\frac{1}{120\pi^2}\frac{\delta_{j_1}\delta_{j_2}\delta_{j_3}\delta_{j_4}\delta_{j_5}\delta_{j_6}}{\lambda_{j_1}+\lambda_{j_2}+\lambda_{j_3}+\lambda_{j_4}+\lambda_{j_5}+\lambda_{j_6}},(j_1,j_2,j_3,j_4,j_5,j_6)\in(\Delta_0\cup\Delta_1\cup\Delta_2)\setminus\mathcal{N},\\
0,\qquad otherwise.\\
\end{array}\right.
\end{align*}
It follows from the Proposition 1.1 that $\lambda_{j_1}+\lambda_{j_2}+\lambda_{j_3}+\lambda_{j_4}+\lambda_{j_5}+\lambda_{j_6}\ne 0$ when $j_1+j_2+j_3+j_4+j_5+j_6=0.$ Thus $F$ is well defined.

We firstly establish the regularity of Hamiltonian vector field $X_F$.
By the Proposition 1.1, we have
\begin{align}\label{c16}
|iF_{j_1j_2j_3j_4j_5j_6}|\le \frac{\delta_{j_1} \delta_{j_2} \delta_{j_3} \delta_{j_4} \delta_{j_5} \delta_{j_6}}{120\pi^2 C(n_1,n_2)}.
\end{align}
The $j$-th entry of the vector field $X_F$ takes the following form
\begin{align}\begin{split}
 i\sigma_j\frac{\partial F}{\partial z_{-j}}&=i\sigma_j\sum\limits_{j_2+j_3+j_4+j_5+j_6=j}[F_{(-j)j_2j_3j_4j_5j_6}+F_{j_2(-j)j_3j_4j_5j_6}\\
 &+F_{j_2j_3(-j)j_4j_5j_6}+F_{j_2j_3j_4(-j)j_5j_6}+F_{j_2j_3j_4j_5(-j)j_6}+F_{j_2j_2j_4j_5j_6(-j)}],\\
 \end{split}
\end{align}
therefore one has
\begin{align*}\begin{split}
|i\sigma_j\frac{\partial F}{\partial z_{-j}}|&\le\frac{1}{20\pi^2 C(n_1,n_2)}\delta_j\sum\limits_{j_2+j_3+j_4+j_5+j_6=j}\delta_{j_2}\delta_{j_3}\delta_{j_4}\delta_{j_5}\delta_{j_6}|z_{j_2} z_{j_3} z_{j_4} z_{j_5} z_{j_6}|\\
&\le \frac{1}{20\pi^2 C(n_1,n_2)}\frac{1}{\sqrt{|j|}}\sum\limits_{j_2+j_3+j_4+j_5+j_6=j}\frac{|z_{j_2}|}{\sqrt{|j_2|}}\frac{|z_{j_3}|}{\sqrt{|j_3|}}\frac{|z_{j_4}|}{\sqrt{|j_4|}}\frac{|z_{j_5}|}{\sqrt{|j_5|}}\frac{|z_{j_6}|}{\sqrt{|j_6|}}.
\end{split}\end{align*}
Let $w=(\frac{|z_{j_2}|}{\sqrt{|j_2|}}:j_2\in\bar{\mathbb{Z}}),$ hence
\begin{align}\label{c17}\begin{split}
\|\lfloor X_{F}\rceil\|_q&=\|\lfloor X_{F}\rceil\|_{p+1}\\
&\le \frac{1}{20\pi^2 C(n_1,n_2)}\|w*w*w*w*w\|_{p+\frac{1}{2}}\\
&\le \frac{1}{20\pi^2 C(n_1,n_2)}\|w\|_{p+\frac{1}{2}}^5\\
&=\frac{1}{20\pi^2 C(n_1,n_2)}\|z\|_{p}^5,
\end{split}\end{align}
where $*$ represents the convolution operation. It follows that $X_{F}$ is a real analytic vector field mapping a small neighborhood of the origin in $h_p$ to $h_q$, meanwhile $\Phi_1=X_F^1$ is a symplectic coordinate transformation defined in the neighborhood of the origin in $h_p$.

Secondly, using the Taylor's formula, one has
\begin{align}\label{c18}
\begin{split}
H\circ\Phi_1&=H\circ X_F^t|_{t=1}\\
&=H+\{\Lambda,F\}+\int_0^1 (1-t)\{\{\Lambda,F\},F\}\circ X_F^t dt+\int_0^1\{G,F\}\circ X_F^t dt\\
&=\Lambda +\tilde{G}+\{\Lambda,F\}+\bar{G}+\hat{G}\\
&\qquad+\int_0^1 (1-t)\{\{\Lambda,F\},F\}\circ X_F^t dt+\int_0^1\{G,F\}\circ X_F^t dt.
\end{split}\end{align}
By direct computation, we have
\begin{align}\label{c20}
\tilde{G}+\{\Lambda,F\}=0.
\end{align}
Taking account of the relation (\ref{c20}), one obtains
\begin{align}\label{c21}
H\circ\Phi_1=\Lambda+\bar{G}+\hat{G}+\int_0^1\{\bar{G}+\hat{G}+t\tilde{G},F\}\circ X_F^t dt.
\end{align}
Denote
\begin{align}\label{c22}
R:=\int_0^1\{\bar{G}+\hat{G}+t\tilde{G},F\}\circ X_F^t dt.
\end{align}
It is easy to verify that
\begin{align}
 |\hat{G}|=O(\|z\|_p^{3}\|\hat{z}\|_p^{3})\text{ and }|R|=O(\|z\|_p^{10}),
\end{align}
which completes the proof.
\end{proof}

\subsection{Normal form of order ten}
To obtain the partial Birkhoff normal form of order ten, one requires to define some new index sets in the following way:
\begin{align*}\begin{split}
\mathcal{N}'&=\{(j_1,\cdots,j_{10})\in\bar{\mathbb{Z}}^{10}: \text{There exists a 10-permutation $\tau$ such that} \\
&\quad j_{\tau(1)}=-j_{\tau(2)},j_{\tau(3)}=-j_{\tau(4)},j_{\tau(5)}=-j_{\tau(6)},j_{\tau(7)}=-j_{\tau(8)},j_{\tau(9)}=-j_{\tau(10)}\},
\end{split}\end{align*}
\begin{align*}\begin{split}
\Delta_0'&=\{(j_1,\cdots,j_{10})\in\bar{\mathbb{Z}}^{10}:\text{All the components of $j_1,\cdots,j_{10}$ are in}\\
&\qquad\{\pm n_1, \pm n_2\}\},
\end{split}\end{align*}
\begin{align*}\begin{split}
\Delta_1'&=\{(j_1,\cdots,j_{10})\in\bar{\mathbb{Z}}^{10}:\text{There is only one component of $j_1,\cdots,j_{10}$} \\
&\quad\text{not in }\{\pm n_1, \pm n_2\}\},\\
\Delta_2'&=\{(j_1,\cdots,j_{10})\in\bar{\mathbb{Z}}^{10}:\text{There are at least two components of $j_1,\cdots,j_{10}$ }\\
&\qquad\text{not in }\{\pm n_1, \pm n_2\}\}.
\end{split}\end{align*}
By what we have just defined, one has the following lemma.
\begin{lemma}\label{c22-00}
Assume that $n_1, n_2\in S$, then for $(j_1,j_2,\cdots,j_{10})\in(\Delta_0'\cup\Delta_1')\setminus\mathcal{N}'$ satisfying $j_1+j_2+\cdots+j_{10}=0$, we have
\begin{align*}
|\lambda_{j_1}+\lambda_{j_2}+\cdots\lambda_{j_{10}}|\ge C(n_1,n_2),
\end{align*}
in which $C(n_1,n_2)$ denotes a positive constant depending only on $n_1, n_2.$
\end{lemma}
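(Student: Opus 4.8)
The plan is to collapse the whole estimate into a single function of two integer ``net multiplicities''. Among the components lying in $S=\{\pm n_1,\pm n_2\}$, let $p$ denote the number of entries equal to $n_1$ minus the number equal to $-n_1$, and $q$ the number equal to $n_2$ minus the number equal to $-n_2$; then the contribution of the $S$-components to $\lambda_{j_1}+\cdots+\lambda_{j_{10}}$ is exactly $p\lambda_{n_1}+q\lambda_{n_2}$. First I would dispose of the case $(j_1,\dots,j_{10})\in\Delta_0'$. Here all ten indices lie in $S$, so the zero-sum relation reads $pn_1+qn_2=0$ with $|p|,|q|\le 10$; since $n_1\ll n_2$, one has $|pn_1|\le 10n_1<n_2\le|qn_2|$ whenever $q\ne 0$, so cancellation forces $q=0$ and then $p=0$. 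Thus the numbers of $+n_1$ and $-n_1$ agree and likewise for $\pm n_2$, so the ten indices pair off into five blocks $(x,-x)$ and the tuple lies in $\mathcal{N}'$. Consequently $\Delta_0'$ contributes nothing to $(\Delta_0'\cup\Delta_1')\setminus\mathcal{N}'$ under the zero-sum constraint, and it remains only to handle $\Delta_1'$.

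For a tuple in $\Delta_1'$ exactly nine indices lie in $S$ and one, say $s$, lies outside; the zero-sum relation then pins down $s=-pn_1-qn_2$, so the frequency sum becomes the single function
\[
g(p,q):=p\lambda_{n_1}+q\lambda_{n_2}+\lambda_{-pn_1-qn_2},
\]
with $(p,q)$ ranging over a finite set. Because the nine $S$-indices are odd in number, a parity count gives $p+q\equiv 9\equiv 1\pmod 2$, so $(p,q)\ne(0,0)$; moreover the defining requirement $s\notin S$ excludes exactly the borderline pairs with $|p|=1,\,q=0$ and $p=0,\,|q|=1$ (the alternatives $|p|n_1=n_2$ or $|q|n_2=n_1$ are impossible since $n_1\ll n_2$). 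It therefore suffices to bound $|g(p,q)|$ away from zero over this finite admissible set.

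I would then split on whether $p$ or $q$ vanishes. If $q=0$, so that $p$ is odd with $|p|\ge 3$, a direct computation collapses $g$ to
\[
g(p,0)=\frac{p(p^2-1)n_1^3}{(1+n_1^2)(1+p^2n_1^2)},
\]
which is nonzero precisely because $p(p^2-1)\ne 0$; the case $p=0$, $|q|\ge 3$ is symmetric, with $g(0,q)=\dfrac{q(q^2-1)n_2^3}{(1+n_2^2)(1+q^2n_2^2)}\ne 0$. If instead $p\ne 0$ and $q\ne 0$, then $|s|=|pn_1+qn_2|\ge n_2-10n_1>\tfrac{1}{2}n_2$, so both $q\lambda_{n_2}$ and $\lambda_{-pn_1-qn_2}$ are $O(1/n_2)$ while $|p\lambda_{n_1}|\ge n_1/(1+n_1^2)$; hence $|g(p,q)|\ge \tfrac{1}{2}\,n_1/(1+n_1^2)$ once $n_2\gg n_1$. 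Since the admissible $(p,q)$ form a finite set and $g$ is nonzero on each, the minimum of $|g|$ is a positive constant $C(n_1,n_2)$, which is the desired bound.

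The point I would emphasize is where this argument is genuinely easier than the order-six estimate in Proposition \ref{p3.1}. There, the presence of two indices outside $S$ leaves two free integers, forcing Diophantine sub-arguments (for instance, that a quadratic such as $x^2-(3+n_1^2)x+15n_1^2$ has no integer root). Here the restriction to $\Delta_0'\cup\Delta_1'$ permits at most one index outside $S$, and the zero-sum relation then determines that index completely; this removes all Diophantine freedom and reduces everything to the non-vanishing of the one-variable quantities $p(p^2-1)$ and $q(q^2-1)$ together with the size separation $n_1\ll n_2$. Accordingly, the only delicate steps are the bookkeeping ones: checking the parity constraint that $p+q$ is odd, and verifying that the borderline pairs $|p|=1,\,q=0$ and $p=0,\,|q|=1$ are exactly those that would return $s$ to $S$.
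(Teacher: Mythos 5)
Your proposal is correct and follows essentially the same route as the paper: your net multiplicities $(p,q)$ are exactly the paper's $(\alpha-\bar{\alpha},\beta-\bar{\beta})$, and your four situations ($\Delta_0'$ forced into $\mathcal{N}'$; both $p,q\neq 0$; exactly one of them zero; both zero excluded by the parity of nine) match the paper's observation that $\Delta_0'\setminus\mathcal{N}'$ is empty together with its Cases 1--4. The only differences are cosmetic: where the paper bounds the $q=0$ case by $\frac{2n_1}{1+n_1^2}-\frac{2n_1}{1+4n_1^2}$ using $|p|\ge 2$, you compute the closed form $\frac{p(p^2-1)n_1^3}{(1+n_1^2)(1+p^2n_1^2)}$ and note $|p|\ge 3$ from parity, which is slightly sharper but not a different argument.
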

\begin{proof}
In view of the facts that $n_1\ll n_2$ and $j_1+j_2+\cdots+j_{10}=0$, it follows easily that the set $\Delta_0'\setminus\mathcal{N}'$ is empty, hence it suffices to consider the case when $(j_1,j_2,\cdots,j_{10})\in\Delta_1'$. Without loss of generality, we assume that the component $j_{10}\notin\{\pm n_1,\pm n_2\}$. Let $\alpha,\bar{\alpha},\beta,\bar{\beta}$ be the number of $n_1,-n_1,n_2,-n_2$ in $\{j_1,j_2,\cdots,j_9\}$ respectively. Thus one has
\begin{align}\label{c22-0}
(\alpha-\bar{\alpha})n_1+(\beta-\bar{\beta})n_2+j_{10}=0,
\end{align}
and
\begin{align}
\alpha+\bar{\alpha}+\beta+\bar{\beta}=9.
\end{align}

\begin{itemize}
\item Case 1: $\alpha\neq\bar{\alpha}$ and $\beta\neq\bar{\beta}.$ \\
From (\ref{c22-0}), we obtain that $|j_{10}|\ge\frac{|\beta-\bar{\beta}|}{2} n_2.$ Therefore, one has
\begin{align*}
\begin{split}
|\lambda_{j_1}+\lambda_{j_2}+\cdots+\lambda_{j_{10}}|&\ge |\alpha-\bar{\alpha}|\frac{n_1}{1+n_1^2}-|\beta-\bar{\beta}|\frac{n_2}{1+n_2^2}-\frac{|j_{10}|}{1+j_{10}^2}\\
&\ge \frac{|\alpha-\bar{\alpha}|}{2}\frac{n_1}{1+n_1^2}.
\end{split}\end{align*}
\item Case 2: $\alpha\neq\bar{\alpha}$ and $\beta=\bar{\beta}$.\\
Now one has $(\alpha-\bar{\alpha})n_1+j_{10}=0,$ which implies that $|\alpha-\bar{\alpha}|\ge 2.$
In fact, if $|\alpha-\bar{\alpha}|=1,$ one has $j_{10}=\pm n_1,$ which leads to contradiction.
Thus we have
\begin{align}
|\lambda_{j_1}+\cdots+\lambda_{j_{10}}|\ge \frac{2n_1}{1+n_1^2}-\frac{2n_1}{1+4n_1^2}\ge C(n_1).
\end{align}
\item Case 3: $\alpha=\bar{\alpha}$ and $\beta\neq\bar{\beta}$.\\
Observe that $(\beta-\bar{\beta})n_2+j_{10}=0,$ by an analogous discussion, one has
\begin{align}
|\lambda_{j_1}+\cdots+\lambda_{j_{10}}|\ge \frac{2n_2}{1+n_2^2}-\frac{2n_2}{1+4n_2^2}\ge C(n_2).
\end{align}
\item Case 4: $\alpha=\bar{\alpha}$ and $\beta=\bar{\beta}$.\\
This case will never happen in view of $\alpha+\bar{\alpha}+\beta+\bar{\beta}=9.$
\end{itemize}

\end{proof}

By simple calculation, the formula of (\ref{c22}) can be rewritten as
\begin{align}\label{c22-2}
R=\{\bar{G}+\hat{G}+\frac{1}{2}\tilde{G},F\}+\int_0^1\{\{(1-t)(\hat{G}+\bar{G}+\frac{1}{2}(1-t^2)\tilde{G},F\},F\}\circ X_F^t dt,
\end{align}
in which the order of the first term is 10, and the order of the second term is at least 14. To obtain the partial Birkhoff normal form of order 10, it suffices to introduce another real analytic symplectic transformation. To do this, let us split the first term of $R$ in (\ref{c22-2}) into the following three parts:
\begin{align}\label{c22-3}
\{\bar{G}+\hat{G}+\frac{1}{2}\tilde{G},F\}=\bar{R}+\tilde{R}+\hat{R},
\end{align}
in which $\bar{R}$ denotes the normal form part with $(j_1,\cdots,j_{10})\in\Delta_0'\cap\mathcal{N}'$, while $\tilde{R}$ is the non-normal form term with $(j_1,\cdots,j_{10})\in(\Delta_0'\cup\Delta_1')\setminus\mathcal{N}'$, and meanwhile $\hat{R}$ represents the part with $(j_1,\cdots,j_{10})\in\Delta_2'.$

By lemma \ref{c22-00}, one can find a symplectic transformation to eliminate the term of order 10, that is, $\tilde{R}$. Thus one can further obtain a partial Birkhoff normal form of order 10, which is stated as follows:
\begin{proposition}\label{thm2}
Assume $n_1\ll n_2$, then by a symplectic transformation $\Phi_2$, which is real analytic in some neiborhood of the origin from $h_p$ to $h_q,$ the Hamiltonian function
$H\circ\Phi_1$ in (\ref{c12}) is changed into
\begin{align}\label{y1}
H\circ\Phi_1\circ\Phi_2=\Lambda+\bar{G}+\hat{G}+\bar{R}+\hat{R}+T,
\end{align}
where $\bar{R}$ has the following form
\begin{align}\label{y2}
\begin{split}
\bar{R}&=R_0|z_{n_1}|^{10}+R_1|z_{n_1}|^8|z_{n_2}|^2+R_2|z_{n_1}|^6|z_{n_2}|^4+R_3|z_{n_1}|^4|z_{n_2}|^6\\
&+R_4|z_{n_1}|^2|z_{n_2}|^8+R_5|z_{n_2}|^{10},
\end{split}
\end{align}
with coefficients $R_0,\cdots,R_5$ real and depending only on $n_1$ and $n_2.$
Moreover,
\begin{align}\label{y3}
|\hat{R}|=O(\|z\|_p^8\|\hat{z}\|_p^2),
\end{align}
\begin{align}\label{y4}
|T|=O(\|z\|_p^{14}).
\end{align}
The Hamiltonian vector fields $X_{\bar{R}},X_{\hat{R}},X_T$ are analytic from $h_p$ to $h_q.$
\end{proposition}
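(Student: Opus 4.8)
The plan is to remove the non-normal order-ten term $\tilde R$ by a second Lie transform, mirroring the construction of $\Phi_1$ in Proposition \ref{thm1}. Set $H_1:=H\circ\Phi_1=\Lambda+\bar G+\hat G+R$. By (\ref{c22-2})--(\ref{c22-3}) the order-ten content of $R$ is exactly $\{\bar G+\hat G+\tfrac12\tilde G,F\}=\bar R+\tilde R+\hat R$, while the remainder of $R$ is already of order at least $14$; since $\Delta_0'\setminus\mathcal N'$ is empty, the indices of $\tilde R$ all lie in $\Delta_1'\setminus\mathcal N'$. Thus it suffices to kill $\tilde R$.

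First I would define the generating Hamiltonian
$$F_2=\sum_{(j_1,\cdots,j_{10})\in\bar{\mathbb{Z}}^{10}}(F_2)_{j_1\cdots j_{10}}\,z_{j_1}\cdots z_{j_{10}}$$
by solving the homological equation $\{\Lambda,F_2\}=-\tilde R$. Because $\{\Lambda,z_{j_1}\cdots z_{j_{10}}\}=i(\lambda_{j_1}+\cdots+\lambda_{j_{10}})z_{j_1}\cdots z_{j_{10}}$, this amounts to dividing each coefficient of $\tilde R$ by $\lambda_{j_1}+\cdots+\lambda_{j_{10}}$ on $(\Delta_0'\cup\Delta_1')\setminus\mathcal N'$ and setting all other coefficients to zero. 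Lemma \ref{c22-00} gives $|\lambda_{j_1}+\cdots+\lambda_{j_{10}}|\ge C(n_1,n_2)>0$ there, so $F_2$ is well defined. Its regularity then follows verbatim from Proposition \ref{thm1}: estimating the entry $\sigma_j\,\partial F_2/\partial z_{-j}$ by a convolution of nine copies of $w=(|z_j|/\sqrt{|j|})$ and using $\delta_j\approx|j|^{-1/2}$ gives $\|\lfloor X_{F_2}\rceil\|_q\le C(n_1,n_2)^{-1}\|z\|_p^9$, so $\Phi_2=X_{F_2}^1$ is a real analytic symplectic map of a neighborhood of the origin in $h_p$.

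Next I would Taylor-expand along the flow as in (\ref{c18}),
$$H_1\circ\Phi_2=H_1+\{\Lambda,F_2\}+\{\bar G+\hat G+R,F_2\}+\int_0^1(1-t)\{\{H_1,F_2\},F_2\}\circ X_{F_2}^t\,dt.$$
Substituting $\{\Lambda,F_2\}=-\tilde R$ and $R=\bar R+\tilde R+\hat R+O(\|z\|_p^{14})$ makes the order-ten terms collapse to $\bar R+\hat R$, which produces (\ref{y1}) with
$$T:=\big(R-\bar R-\tilde R-\hat R\big)+\{\bar G+\hat G+R,F_2\}+\int_0^1(1-t)\{\{H_1,F_2\},F_2\}\circ X_{F_2}^t\,dt.$$
Since $\bar G,\hat G$ are of order $6$, $R$ of order $\ge6$, and $F_2$ of order $10$, every Poisson bracket above raises the total degree to at least $6+10-2=14$; combined with the leftover order-$\ge14$ part of $R$ this gives $|T|=O(\|z\|_p^{14})$.

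Finally the shape of $\bar R$ and the size of $\hat R$ are read off from the index sets: the monomials of $\bar R$ have all ten indices in $\{\pm n_1,\pm n_2\}$ and paired as in $\mathcal N'$, so each is a product of $|z_{n_1}|^2$ and $|z_{n_2}|^2$ of total degree ten, giving precisely (\ref{y2}), while reality of $R_0,\cdots,R_5$ follows from that of $H$; the term $\hat R$ carries indices in $\Delta_2'$, hence at least two factors $\hat z$, so $|\hat R|=O(\|z\|_p^8\|\hat z\|_p^2)$. Analyticity of $X_{\bar R},X_{\hat R},X_T$ from $h_p$ to $h_q$ follows from the same convolution estimates used for $X_G$ and $X_F$. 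I expect the only real obstacle to be not the algebra but ensuring $F_2$ defines an analytic vector field with the gain $q=p+1$: this rests entirely on the uniform lower bound of Lemma \ref{c22-00}, since, unlike the classical case, the divisors $\lambda_{j_1}+\cdots+\lambda_{j_{10}}$ accumulate at $0$, and without such a floor they would wreck the convolution estimate.
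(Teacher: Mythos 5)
Your proposal is correct and follows exactly the route the paper intends: the paper omits this proof, remarking only that it is ``similar with Proposition \ref{thm1}'', and your argument is precisely that analogue at order ten --- a second Lie transform $\Phi_2=X_{F_2}^1$ generated by solving $\{\Lambda,F_2\}=-\tilde R$, with the small divisors $\lambda_{j_1}+\cdots+\lambda_{j_{10}}$ controlled by Lemma \ref{c22-00}, the regularity of $X_{F_2}$ obtained by the same convolution estimate as in (\ref{c17}), and the degree count and index-set bookkeeping yielding (\ref{y1})--(\ref{y4}). You also correctly identify the one genuinely delicate point (the uniform lower bound on divisors accumulating at zero) and note the needed facts that $\Delta_0'\setminus\mathcal N'=\emptyset$ and that monomials of $\mathcal N'\cap\Delta_0'$ pair into $|z_{n_1}|^2,|z_{n_2}|^2$ factors, so nothing essential is missing.
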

\begin{proof}
The proof of this Proposition is similar with Proposition \ref{thm1}, we omit it.
\end{proof}
\subsection{Normal form of order 14}
In this subsection, we shall remove some terms of order 14 to guarantee that the perturbation satisfies the \textbf{Assumption C} for the KAM theorem \ref{thmA}. For this, we define the normal form set
\begin{align*}\begin{split}
\mathcal{N}''&=\{(j_1,\cdots,j_{14})\in\bar{\mathbb{Z}}^{14}: \text{ There exists a 14-permutation $\tau$ such that } \\
& j_{\tau(1)}=-j_{\tau(2)},j_{\tau(3)}=-j_{\tau(4)},j_{\tau(5)}=-j_{\tau(6)},j_{\tau(7)}=-j_{\tau(8)},j_{\tau(9)}=-j_{\tau(10)},\\
&j_{\tau(11)}=-j_{\tau(12)},j_{\tau(13)}=-j_{\tau(14)}\},
\end{split}\end{align*}
and the index sets as follows:
\begin{align*}
\Delta_0''=\{(j_1,\cdots,j_{14})\in\bar{\mathbb{Z}}^{14}:\text{All the components are in } \{\pm n_1,\pm n_2\}\},
\end{align*}
\begin{align*}
\Delta_1''=\{(j_1,\cdots,j_{14})\in\bar{\mathbb{Z}}^{14}:\exists\text{ at least one component not in } \{\pm n_1,\pm n_2\}\}.
\end{align*}

By direct computation, the expression of $T$ in proposition \ref{thm2} reads
\begin{align}\label{y6}\begin{split}
T&=\{\{\frac{1}{2}(\bar{G}+\hat{G})+\frac{1}{3}\tilde{G},F\},F\}\\
&+\int_0^1\{\{\{\frac{1}{2}(t-1)^2(\bar{G}+\hat{G})+\frac{1}{6}(t^3-3t+2)\tilde{G},F\},F\},F\}\circ X_F^t dt,\\
\end{split}
\end{align}
where the order of the first term is 14, and the order of the second term is at least 18.

Let us split the first term of $T$ in (\ref{y6}) into three parts:
\begin{align}\label{y7}
\{\{\frac{1}{2}(\bar{G}+\hat{G})+\frac{1}{3}\tilde{G},F\},F\}=\bar{T}+\tilde{T}+\hat{T},
\end{align}
in which $\bar{T}$ is the normal form part with $(j_1,\cdots,j_{14})\in\Delta_0''\cap\mathcal{N}'',$ $\tilde{T}$ is the non-normal form term fulfilling $(j_1,\cdots,j_{14})\in\Delta_0''\cup\mathcal{N}''$, and $\hat{T}$ denotes the part with $(j_1,\cdots,j_{14})\in\Delta_1''.$ Next, it suffices to seek a symplectic transformation to eliminate the term $\tilde{T}$. To this end, it requires to establish a lemma about the divisor $\lambda_{j_1}+\cdots\lambda_{j_{14}}$:
\begin{lemma}\label{y8}
Suppose $n_1\ll n_2,$ then for $(j_1,\cdots,j_{14})\in\Delta_0''\setminus\mathcal{N}''$ fulfilling $j_1+\cdots+j_{14}=0,$ one has
\begin{align}\label{y9-0}
|\lambda_{j_1}+\lambda_{j_2}+\cdots+\lambda_{j_{14}}|\ge C(n_1,n_2).
\end{align}
\end{lemma}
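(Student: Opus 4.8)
The plan is to reduce the statement to a counting argument on the multiplicities of the four admissible values $\pm n_1,\pm n_2$, and then to observe that, once the zero-momentum constraint $j_1+\cdots+j_{14}=0$ is imposed, the index set in question is actually empty — so that (\ref{y9-0}) holds vacuously. This is the same mechanism that made $\Delta_0'\setminus\mathcal{N}'$ empty in the order-ten Lemma \ref{c22-00}.

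First I would parametrize the index tuple. Since $(j_1,\dots,j_{14})\in\Delta_0''$, every component lies in $\{n_1,-n_1,n_2,-n_2\}$; let $\alpha,\bar\alpha,\beta,\bar\beta$ denote the number of components equal to $n_1,-n_1,n_2,-n_2$ respectively, so that $\alpha+\bar\alpha+\beta+\bar\beta=14$. Using $\lambda_{-j}=-\lambda_j$ and writing $p=\alpha-\bar\alpha$, $q=\beta-\bar\beta$, the divisor collapses to
$$\lambda_{j_1}+\cdots+\lambda_{j_{14}}=p\,\frac{n_1}{1+n_1^2}+q\,\frac{n_2}{1+n_2^2},$$
while the constraint $j_1+\cdots+j_{14}=0$ becomes $p\,n_1+q\,n_2=0$.

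The key step is to rule out nontrivial $(p,q)$. Since $\alpha,\bar\alpha,\beta,\bar\beta\ge 0$, one has $|p|+|q|\le(\alpha+\bar\alpha)+(\beta+\bar\beta)=14$. If $q\ne 0$, then $|q|\,n_2=|p|\,n_1\le 14\,n_1$, forcing $n_2\le 14\,n_1$, which contradicts $n_1\ll n_2$; hence $q=0$, and then $p\,n_1=0$ gives $p=0$. But $p=0$ and $q=0$ mean $\alpha=\bar\alpha$ and $\beta=\bar\beta$, so the fourteen indices split into $\alpha$ pairs $(n_1,-n_1)$ and $\beta$ pairs $(n_2,-n_2)$, i.e. $(j_1,\dots,j_{14})\in\mathcal{N}''$. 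This contradicts the hypothesis $(j_1,\dots,j_{14})\in\Delta_0''\setminus\mathcal{N}''$.

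Consequently the set $\Delta_0''\setminus\mathcal{N}''$ intersected with the zero-momentum hyperplane is empty, so the asserted bound holds vacuously for any $C(n_1,n_2)>0$ (and, incidentally, $\tilde{T}$ in (\ref{y7}) is already identically zero). I do not expect a genuine obstacle here: the only substantive point is the quantitative threshold $n_2>14\,n_1$ implicit in $n_1\ll n_2$, which is exactly what collapses $(p,q)$ to $(0,0)$. Note that the truly delicate divisor estimates of this paper — Proposition \ref{p3.1} and the $\Delta_1'$ part of Lemma \ref{c22-00} — concern tuples with a component \emph{outside} $S$; here every component lies in $S$, so the momentum constraint alone forces the pairing and no such estimate is required.
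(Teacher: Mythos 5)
Your proof is correct and coincides with what the paper itself does: its proof of Lemma \ref{y8} is omitted with a pointer to Lemma \ref{c22-00}, whose opening step is exactly your multiplicity count (there showing the zero-momentum part of $\Delta_0'\setminus\mathcal{N}'$ is empty), and for $\Delta_0''$ that count --- $pn_1+qn_2=0$ with $|p|+|q|\le 14$ forcing $p=q=0$ under $n_1\ll n_2$, hence membership in $\mathcal{N}''$ --- is the entire content, so the bound holds vacuously. Your incidental observation that consequently $\tilde{T}\equiv 0$ (making the transformation $\Phi_3$ of Proposition \ref{thm3} trivial) is also accurate and consistent with this reading.
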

By an argument similar to that of Lemma \ref{c22-00}, we can prove this lemma, we omit the details.

By applying the Lemma \ref{y8}, one can further arrive at the following results.
\begin{proposition}\label{thm3}
Assume $n_1\ll n_2.$ Then by another symplectic transformation $\Phi_3$, which is real analytic in some neighborhood of the origin from $h_p$ to $h_q,$ the Hamitonian function $H\circ\Phi_1\circ\Phi_2$ in (\ref{y1}) transformed into a partial Birkhoff normal form of order 14 in the following form
\begin{align}\label{y9}
H\circ\Phi_1\circ\Phi_2\circ\Phi_3=\Lambda+\bar{G}+\hat{G}+\bar{R}+\hat{R}+\bar{T}+\hat{T}+W,
\end{align}
in which $\bar{T}$ reads
\begin{align}\label{y10}\begin{split}
\bar{T}&=T_0|z_{n_1}|^{14}+T_1|z_{n_1}|^{12}|z_{n_2}|^2+T_2|z_{n_1}|^{10}|z_{n_2}|^4+T_3|z_{n_1}|^{8}|z_{n_2}|^6  \\
&+T_4|z_{n_1}|^{6}|z_{n_2}|^8+T_5|z_{n_1}|^{4}|z_{n_2}|^{10}+T_6|z_{n_1}|^{2}|z_{n_2}|^{12}+T_7||z_{n_2}|^{14}
\end{split}\end{align}
with real coefficents $T_0,\cdots,T_7$ depending only on $n_1$ and $n_2,$
and $W$ is of order at least 18 with the following form
\begin{align}\label{y11}
W=\int_0^1\{\{\{\frac{1}{2}(t-1)^2(\bar{G}+\hat{G})+\frac{1}{6}(t^3-3t+2)\tilde{G},F\},F\},F\}\circ X_F^t dt,
\end{align}
\begin{align}\label{y12}
|\hat{T}|=O(\|z\|_p^{13}\|\hat{z}\|_p),
\end{align}
\begin{align}\label{y13}
|W|=O(\|z\|_p^{18}).
\end{align}
Furthermore, the Hamiltonian vector fields $X_{\bar{T}},X_{\hat{T}},X_{W}$ are analytic from $h_p$ to $h_q.$
\end{proposition}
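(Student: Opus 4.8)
The plan is to produce $\Phi_3$ as the time-one map $X_{F'}^1$ of the flow generated by an order-14 polynomial $F'$ chosen to annihilate the purely tangential, non-normal part $\tilde{T}$, exactly mirroring the construction of $\Phi_1$ in Proposition \ref{thm1}. Writing $F'=\sum_{(j_1,\ldots,j_{14})\in\bar{\mathbb{Z}}^{14}}F'_{j_1\cdots j_{14}}z_{j_1}\cdots z_{j_{14}}$, I would set
\begin{align*}
iF'_{j_1\cdots j_{14}}=\begin{cases}
\dfrac{\tilde{T}_{j_1\cdots j_{14}}}{\lambda_{j_1}+\cdots+\lambda_{j_{14}}}, & (j_1,\ldots,j_{14})\in\Delta_0''\setminus\mathcal{N}'',\\
0, & \text{otherwise},
\end{cases}
\end{align*}
where $\tilde{T}_{j_1\cdots j_{14}}$ are the coefficients of $\tilde{T}$ from the splitting (\ref{y7}). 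Momentum conservation $j_1+\cdots+j_{14}=0$ is inherited from $G$ through the iterated Poisson brackets in (\ref{y6}), so Lemma \ref{y8} applies and bounds the divisor below by $C(n_1,n_2)>0$ on $\Delta_0''\setminus\mathcal{N}''$; hence $F'$ is well defined with $|F'_{j_1\cdots j_{14}}|\le|\tilde{T}_{j_1\cdots j_{14}}|/C(n_1,n_2)$. Since every index of a surviving monomial lies in the finite set $S$, $F'$ is a polynomial in $z_{\pm n_1},z_{\pm n_2}$ alone, so $X_{F'}$ is trivially real analytic, satisfies $\|\lfloor X_{F'}\rceil\|_q\le C(n_1,n_2)\|z\|_p^{13}$, and $\Phi_3=X_{F'}^1$ is a symplectic map near the origin.

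Next I would expand $H\circ\Phi_1\circ\Phi_2=\Lambda+\bar{G}+\hat{G}+\bar{R}+\hat{R}+T$ along the flow of $X_{F'}$ by the Lie--Taylor formula as in (\ref{c18}). The decisive bookkeeping is that $\{\Lambda,\cdot\}$ preserves degree while any bracket against $F'$ (degree 14) raises the degree of a term of degree $d$ to $d+12$. Using (\ref{y6}) in the form $T=\{\{\tfrac12(\bar{G}+\hat{G})+\tfrac13\tilde{G},F\},F\}+(\text{order}\ge18)$ and the degree-14 splitting (\ref{y7}), the homological equation $\tilde{T}+\{\Lambda,F'\}=0$ cancels $\tilde{T}$, leaving $\bar{T}$ and $\hat{T}$ in place. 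Every remaining newly created term — the brackets $\{\bar{G}+\hat{G}+\bar{R}+\hat{R},F'\}$, the higher iterates of the Lie series, and the pre-existing order-$\ge18$ integral remainder of $T$ — has degree at least $18$ and is gathered into $W$; this yields (\ref{y9}), whose leading contribution is the integral (\ref{y11}) and whose size obeys (\ref{y13}).

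It then remains to check the structural claim and the estimates. The form (\ref{y10}) of $\bar{T}$ is forced by $\Delta_0''\cap\mathcal{N}''$: all fourteen indices lie in $\{\pm n_1,\pm n_2\}$ and occur in conjugate pairs, so each surviving monomial is $|z_{n_1}|^{2a}|z_{n_2}|^{2b}$ with $a+b=7$, giving precisely the eight listed terms; the coefficients $T_0,\ldots,T_7$ are real because $H$ is real valued and $\delta_j,\lambda_j$ are real. For the bounds, every monomial of $\hat{T}$ has at least one index outside $S$, i.e. at least one factor $z_j$ with $j\in\bar{\mathbb{Z}}\setminus S$, so $|\hat{T}|=O(\|z\|_p^{13}\|\hat{z}\|_p)$, while $|W|=O(\|z\|_p^{18})$ since $W$ consists only of terms of degree $\ge18$.

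The main obstacle is the regularity of the vector fields $X_{\bar{T}},X_{\hat{T}},X_W$ as maps $h_p\to h_q$ with the full one-derivative gain $q=p+1$. For $X_{\bar{T}}$ this is immediate (finitely many variables), and for $X_{\hat{T}}$ it follows from a convolution estimate of the type used in Section 2. The genuinely technical point is $X_W$: because $W$ is a triply nested Poisson bracket of order-six factors against the divisor-weighted generator $F$, its regularity must be extracted by iterating the convolution (Young-type) inequality behind (\ref{c17}), each factor $\delta_j\approx|j|^{-1/2}$ contributing a half-derivative of smoothing so that the cumulative gain is exactly one derivative. As this is entirely parallel to the proofs of Proposition \ref{thm1} and Proposition \ref{thm2}, I would record the construction and the degree counting carefully and then invoke those convolution bounds to close the regularity estimates, omitting the repetitive computation.
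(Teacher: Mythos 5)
Your proposal is correct and follows essentially the same route as the paper, which omits the proof of Proposition \ref{thm3} by referring back to the construction in Proposition \ref{thm1}: solve the homological equation $\tilde{T}+\{\Lambda,F'\}=0$ with divisors controlled by Lemma \ref{y8}, take $\Phi_3=X_{F'}^1$, and do the degree bookkeeping so that everything newly created has order at least $18$. If anything, your explicit remark that $W$ collects not only the integral remainder (\ref{y11}) but also the brackets $\{\bar{G}+\hat{G}+\bar{R}+\hat{R},F'\}$ and the higher Lie-series terms is more careful than the paper's literal statement of (\ref{y11}).
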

\begin{proof}
This Proposition can be proved similarly as Propositon \ref{thm1}, we omit it.
\end{proof}

\section{Proof of the main Theorem}
In this section, we shall give the proof of our main Theorem \ref{t1} by applying a new KAM Theorem \ref{thmA} developed by the third author in \cite{Yuan2018}, which is stated in the Appendix.

To this end, let us introduce the action-angle varialbles $(y,x)$ as follows
\begin{align}\label{y14}
\left\{\begin{array}{cc}
z_{n_j}=\sqrt{\xi_j^{1/2}+y_j}e^{-ix_j},&\quad z_{-n_j}=\sqrt{\xi_j^{1/2}+y_j}e^{ix_j},\quad j=1,2,\\
z_j=z_j,\quad\qquad &j\ne \pm n_1,\pm n_2,\\
\end{array}\right.
\end{align}
where $\xi=(\xi_1,\xi_2)\in\mathbb{R}_+^2$. Then up to a constant depending on $\xi$, the Hamiltonian (\ref{c12}) turns into
\begin{align}\begin{split}\label{c23}
H=&\langle\omega^0(\xi),y\rangle+\sum\limits_{j\ge 1\text{ and } j\ne n_1,n_2}\Omega_j(\xi)z_j z_{-j}
+L+\hat{G}+\hat{K}+\hat{T}+W,
\end{split}\end{align}
with
\begin{align}\begin{split}\label{c24}
\omega_1^0(\xi)=&\frac{n_1}{1+n_1^2}\big[1+\frac{n_1^2\xi_1}{2\pi^2(1+n_1^2)^2}+\frac{3n_1 n_2\xi_1^{1/2}\xi_2^{1/2}}{\pi^2(1+n_1^2)(1+n_2^2)}+\frac{3 n_2^2\xi_2}{2\pi^2(1+n_2^2)^2}\big]\\
&+\sum_{j=0}^{4}R_{1j}\xi_1^{\frac{4-j}{2}}\xi_2^{\frac{j}{2}}+\sum_{j=0}^{6}T_{1j}\xi_1^{\frac{6-j}{2}}\xi_2^{\frac{j}{2}},\\
\omega_2^0(\xi)=&\frac{n_2}{1+n_2^2}\big[1+\frac{n_2^2\xi_2}{2\pi^2(1+n_2^2)^2}+\frac{3n_1 n_2\xi_1^{1/2}\xi_2^{1/2}}{\pi^2(1+n_1^2)(1+n_2^2)}+\frac{3n_1^2 \xi_1}{2\pi^2(1+n_1^2)^2}\big]\\
&+\sum_{j=0}^{4}R_{2j}\xi_1^{\frac{4-j}{2}}\xi_2^{\frac{j}{2}}+\sum_{j=0}^{6}T_{2j}\xi_1^{\frac{6-j}{2}}\xi_2^{\frac{j}{2}},\\
\end{split}
\end{align}
where the real coefficients $R_{1j},R_{2j},T_{1j},T_{2j}$ depend only on $R_j$ and $T_j$,
\begin{equation}\label{ee36}
  \Omega_j(\xi)=\frac{j}{1+j^2}\biggl[1+\frac{3n_1^2 \xi_1}{2\pi^2(1+n_1^2)^2}+\frac{3n_2^2 \xi_2}{2\pi^2(1+n_2^2)^2}+\frac{6n_1 n_2\xi_1^{1/2}\xi_2^{1/2}}{\pi^2(1+n_1^2)(1+n_2^2)}\biggr],
\end{equation}
and
\begin{equation}\label{d13}
 L=O(|y|^3+|\xi|^{\frac{1}{2}}|y|^2+|\xi|^{\frac{1}{2}}|y|\|\hat{z}\|_p^2+|y|^2\|\hat{z}\|_p^2).
\end{equation}

\begin{enumerate}
\item [Step 1:] Checking the Assumption A of Theorem \ref{thmA}. By some easy computations, ones have
\begin{align*}
\begin{split}
\frac{\partial\omega_1^0(\xi)}{\partial\xi_1}=&\frac{n_1^3}{2\pi^2(1+n_1^2)^3}+\frac{3n_1^2 n_2\xi_1^{-1/2}\xi_2^{1/2}}{2\pi^2(1+n_1^2)^2(1+n_2^2)}\\
&+\sum_{j=0}^{3}\frac{4-j}{2}R_{1j}\xi_1^{\frac{2-j}{2}}\xi_2^{\frac{j}{2}}+\sum_{j=0}^{5}T_{1j}\frac{6-j}{2}\xi_1^{\frac{4-j}{2}}\xi_2^{\frac{j}{2}},\\
\frac{\partial\omega_1^0(\xi)}{\xi_2}=&\frac{3n_1 n_2^2}{2\pi^2(1+n_1^2)(1+n_2^2)^2}+\frac{3n_1^2 n_2\xi_1^{1/2}\xi_2^{-1/2}}{2\pi^2(1+n_1^2)^2(1+n_2^2)}\\
&+\sum_{j=1}^{4}\frac{j}{2}R_{1j}\xi_1^{\frac{4-j}{2}}\xi_2^{\frac{j-2}{2}}+\sum_{j=1}^{6}T_{1j}\frac{j}{2}\xi_1^{\frac{6-j}{2}}\xi_2^{\frac{j-2}{2}},\\
\end{split}
\end{align*}
\begin{align*}
\begin{split}
\frac{\partial\omega_2^0(\xi)}{\partial\xi_1}=&\frac{3n_1^2 n_2}{2\pi^2(1+n_1^2)^2(1+n_2^2)}+\frac{3n_1 n_2^2\xi_1^{-1/2}\xi_2^{1/2}}{2\pi^2(1+n_1^2)(1+n_2^2)^2}\\
&+\sum_{j=0}^{3}\frac{4-j}{2}R_{2j}\xi_1^{\frac{2-j}{2}}\xi_2^{\frac{j}{2}}+\sum_{j=0}^{5}T_{2j}\frac{6-j}{2}\xi_1^{\frac{4-j}{2}}\xi_2^{\frac{j}{2}},\\
\frac{\partial\omega_2^0(\xi)}{\xi_2}=&\frac{ n_2^3}{2\pi^2(1+n_2^2)^3}+\frac{3n_1 n_2^2\xi_1^{1/2}\xi_2^{-1/2}}{2\pi^2(1+n_1^2)(1+n_2^2)^2}\\
&+\sum_{j=1}^{4}\frac{j}{2}R_{2j}\xi_1^{\frac{4-j}{2}}\xi_2^{\frac{j-2}{2}}+\sum_{j=1}^{6}T_{2j}\frac{j}{2}\xi_1^{\frac{6-j}{2}}\xi_2^{\frac{j-2}{2}}.\\
\end{split}
\end{align*}
Choose the parameter domain in the following way,
\begin{align}
\mathcal{O}_*&=\{\xi=(\xi_1,\xi_2): \epsilon^{\frac{1}{2}}\le \xi_1,\xi_2\le 4\epsilon^{\frac{1}{2}}\},
\end{align}
where $\epsilon$ is a positive small constant.
Thus, we further obtain
\begin{align}\label{c26}\begin{split}
&2\pi^4(1+n_1^2)^2(1+n_2^2)^2\det\partial_{\xi}\omega^0(\xi)\\
=&2\pi^4(1+n_1^2)^2(1+n_2^2)^2\big[\frac{\partial\omega_1^0}{\partial\xi_1}\frac{\partial\omega_2^0}{\partial\xi_2}-\frac{\partial\omega_1^0}{\partial\xi_2}\frac{\partial\omega_2^0}{\partial\xi_1}\big]\\
=&-n_1^2 n_2^2\biggl[\frac{3n_1^2\xi_1^{1/2}\xi_2^{-1/2}}{(1+n_1^2)^2}+\frac{3n_2^2\xi_1^{-1/2}\xi_2^{1/2}}{(1+n_2^2)^2}+\frac{4n_1 n_2}{(1+n_1^2)(1+n_2^2)}\biggr]+o(\epsilon^{\frac{1}{2}})\\
\leq&-\frac{n_1^2 n_2^2}{2}\biggl[\frac{3n_1^2}{2(1+n_1^2)^2}+\frac{3n_2^2}{2(1+n_2^2)^2}+\frac{4n_1 n_2}{(1+n_1^2)(1+n_2^2)}\biggr].
\end{split}\end{align}
Therefore, ones conclude that
\begin{equation*}
  \inf\limits_{\xi\in\mathcal{O}_*}|\det{\partial_{\xi}\omega^0(\xi)}|>c_1,
\end{equation*}
where $c_1$ depends on $n_1,n_2.$ Besides, due to the fact $\lambda_{n_1}>\lambda_{n_2},$ it follows that
\begin{align}\begin{split}
&\sup\limits_{\xi\in\mathcal{O}_*}|\partial_{\xi}\omega^0(\xi)|\\
=&\max\limits_{\xi\in\mathcal{O}_*} \{|\frac{\partial\omega_1^0}{\partial{\xi_1}}|,|\frac{\partial\omega_1^0}{\partial{\xi_2}}|,|\frac{\partial\omega_2^0}{\partial{\xi_1}}|,|\frac{\partial\omega_2^0}{\partial{\xi_2}}|\}
\le \frac{5n_1^3}{\pi^2(1+n_1^2)^3}.
\end{split}\end{align}
Now we have proven that the Assumption A holds true.

\item [Step 2:] Checking the Assumption B of Theorem \ref{thmA}. By (\ref{ee36}), one has
\begin{align}\begin{split}
\frac{\partial\Omega_j(\xi)}{\partial\xi_1}&=\frac{j}{1+j^2}\biggl[\frac{3n_1^2}{2\pi^2(1+n_1^2)^2}+\frac{3n_1 n_2\xi_1^{-1/2}\xi_2^{1/2}}{\pi^2(1+n_1^2)(1+n_2^2)}\biggr],\\
\frac{\partial\Omega_j(\xi)}{\partial\xi_2}&=\frac{j}{1+j^2}\biggl[\frac{3n_2^2}{2\pi^2(1+n_2^2)^2}+\frac{3n_1 n_2\xi_1^{1/2}\xi_2^{-1/2}}{\pi^2(1+n_1^2)(1+n_2^2)}\biggr].\\
\end{split}
\end{align}
Therefore, take $c_{13}=\frac{15n^2_1}{2 \pi^2(1+n_1^2)^2}$, we have
\begin{align}
\sup\limits_{\xi\in\mathcal{O}_*}|\partial_{\xi}\Omega_j(\xi)|\le c_{13}|j|^{-1}.
\end{align}
In view of $n_1\gg 1,$ one obtains that $c_{13}$ is small enough. As mentioned in Remark 2 of \cite{Yuan2018}, when $c_{13}$ is small enough, one automatically has that
\begin{align}
\dfrac{d^*}{d\omega}\biggl[(k,\omega)\pm\Omega_j\biggr]>c^*,\quad \dfrac{d^*}{d\omega}\biggl[(k,\omega)\pm(\Omega_j+\Omega_k)\biggr]>c^*
\end{align}
holds true for some positive constant $c^*>0,$ in which $\omega=\omega^0(\xi).$
Let $c_{11}=\frac{3}{2}$ and $c_{12}=2$, since $1\ll n_1\ll n_2$ and $1/4\le\xi_1/\xi_2\leq4$, we get
\begin{align}
c_{11}|j|^{-1}\le \Omega_j(\xi)\le c_{12}|j|^{-1}.
\end{align}
\item[Step 3:] Verifying the Assumption C of Theorem \ref{thmA}. At first, we choose a neighborhood $D_p(s_0,\epsilon^{\frac{5}{8}})$ in the phase space $\mathcal{P}^p$ (The definitions of $D_p(s_0,\epsilon^{\frac{5}{8}})$ and $\mathcal{P}^p$ refer to (\ref{ee47}) and (\ref{ee48})). From (\ref{c07-3}), (\ref{c22-3}), (\ref{y7}), (\ref{y11}), (\ref{d13}) and (\ref{ee49}), we estimate them and obtain that
\begin{equation*}
  \Arrowvert\lfloor{X_L}\rceil\Arrowvert_{q,D_p(s_0,\epsilon^{\frac{5}{8}})\times\mathcal{O}_*}=O(\epsilon^{\frac{3}{2}}),
\end{equation*}
\begin{equation*}
  \Arrowvert\lfloor{X_{\hat{G}}}\rceil\Arrowvert_{q,D_p(s_0,\epsilon^{\frac{5}{8}})\times\mathcal{O}_*}=O(|\xi|^{\frac{3}{4}}||\hat{z}||^3_p\big/\epsilon^{\frac{5}{4}})=O(\epsilon),
\end{equation*}
\begin{equation*}
  \Arrowvert\lfloor{X_{\hat{R}}}\rceil\Arrowvert_{q,D_p(s_0,\epsilon^{\frac{5}{8}})\times\mathcal{O}_*}=O(|\xi|^{\frac{8}{4}}||\hat{z}||^2_p\big/\epsilon^{\frac{5}{4}})=O(\epsilon),
\end{equation*}
\begin{equation*}
  \Arrowvert\lfloor{X_{\hat{T}}}\rceil\Arrowvert_{q,D_p(s_0,\epsilon^{\frac{5}{8}})\times\mathcal{O}_*}=O(|\xi|^{\frac{13}{4}}||\hat{z}||_p\big/\epsilon^{\frac{5}{4}})=O(\epsilon),
\end{equation*}
\begin{equation*}
  \Arrowvert\lfloor{X_{W}}\rceil\Arrowvert_{q,D_p(s_0,\epsilon^{\frac{5}{8}})\times\mathcal{O}_*}=O(|\xi|^{\frac{18}{4}}\big/\epsilon^{\frac{5}{4}})=O(\epsilon),
\end{equation*}
Let $P=L+\hat{G}+\hat{R}+\hat{T}+W$, and it is easy to check that
\begin{align}
\Arrowvert \lfloor X_{P}\rceil\Arrowvert_{q,D_{p}(s_0,r_0)\times\mathcal{O}_*}\le C\epsilon,\; \Arrowvert \lfloor \partial_{\xi}X_{P}\rceil\Arrowvert_{q,D_p(s_0,r_0)\times\mathcal{O}_*}\le C\epsilon^{\frac{1}{2}}.
\end{align}
\item[Step 4:] Verifying the Assumption D of Theorem \ref{thmA}. In (\ref{c04}), $u$ is real-valued if $z_{-j}$ is the complex conjugate of $z_j$. Thus Assumption D holds true.
\item[Step 5:] Verifying the Assumption E of Theorem \ref{thmA}. In view of $B=0$, Assumption E follows natrually.
	\end{enumerate}
Our main Theorem \ref{t1} follows directly from the Theorem \ref{thmA}.

\section{Appendix}
\subsection{KAM theorem with normal frequencies of finite limit-point}
In this appendix, we shall present the new KAM theorem with normal formal frequencies of finite limit-point developed by the third author \cite{Yuan2018}. To this end, we shall introduce some notations. We start with an infinite dimensional Hamiltonian in the form of an integrable parameter dependent normal form part $N$ plus a Hamiltonian perturbation $P$
\begin{align}
H=N+P=(\omega^0(\xi),y)+\sum\limits_{j\in\mathbb{Z}}\Omega_j(\xi)z_j\bar{z}_j+\langle B^0(\xi)z,\bar{z}\rangle+P(x,y,z,\bar{z};\xi)
\end{align}
where the parameter $\xi\in\mathcal{O}\subset\mathbb{R}^n$, and $(x,y,z,\bar{z})\in \mathbb{T}^n\times\mathbb{R}^n\times h_p\times h_p$.
For $x\in\mathbb{R}^n$, denote the Eulidean norm of $x$ by $|x|$. For given $p>\frac{1}{2},~\kappa>0$, define $q=p+\kappa.$ Given positive integer $n>0,$ let $\mathbb{T}_s^n$ represent the complexization of $\mathbb{T}^n$ with width $s>0,$ that is,
$$\mathbb{T}_s^n:=\{x\in\mathbb{C}^n/(2\pi\mathbb{Z}^n):\quad |\Im x|\le s\}.$$
For given $s_0>0$, define the phase space as follows
\begin{equation}\label{ee47}
  \mathcal{P}^p:=\mathbb{T}_{s_0}^n\times\mathbb{C}^n\times h_p\times h_p.
\end{equation}
When the perturbation term $P$ vanishes, the normal formal part $N$ admits $n$-dimensional torus
$$\mathcal{T}_0^n:=\mathbb{T}^n\times\{y=0\}\times\{z=0\}\times\{\bar{z}=0\}.$$
Let us introduce the neighborhood of the torus $\mathcal{T}_0^n$ in $\mathcal{P}^p$ as follows
\begin{equation}\label{ee48}
  D_p(s_0,r_0):=\{(x,y,z,\bar{z})\in\mathcal{P}^p:|\Im x|<s, |y|<r_0^2,\|z\|_p<r_0,\|\bar{z}\|_p<r_0\}.
\end{equation}

Let us introduce some desired norms. Take $0<s\le s_0$, $0<r<r_0$ and $\mathcal{O}_*\subset\mathcal{O}$, for a map $g(x,\xi):\mathbb{T}_s^n\times\mathcal{O}_*\to\mathbb{C}^n$, define
$$|g|_{s,\mathcal{O}_*}^2=\sup\limits_{\xi\in\mathcal{O}_*}\sum\limits_{k\in\mathbb{Z}^n}|\hat{g}(k,\xi)|^2 e^{2|k|s},$$
in which $\hat{g}(k,\xi)$ represents the $k$-Fourier coefficient of $g(x,\xi)$. For a map $g:\mathbb{T}_s^n\times\mathcal{O}_*\to h_p,$ define its form as follows
$$\|g\|_{p,s,\mathcal{O}_*}=\sup\limits_{\xi\in\mathcal{O}_*}\sum\limits_{k\in\mathbb{Z}^n}\|\hat{g}(k,\xi)\|_p^2 e^{2|k|s}.$$
For a map $g:D_p(s,r)\times\mathcal{O}_*\to h_p,$ define
$$\|g(\cdot,y,z,\bar{z};\xi)\|_{p,s}^2=\sum\limits_{k\in\mathbb{Z}^n} e^{2|k|s}\|\hat{g}(k;y,z,\bar{z};\xi)\|_{p}^2,$$
$$\|g\|_{p,q,s,r,\mathcal{O}_*}=\sup\limits_{\xi\in\mathcal{O}_*,|y|<r^2,\|z\|_p<r,\|\bar{z}\|_p<r}\|g(\cdot,y,z,\bar{z};\xi)\|_{q,s}.$$
Consider a map $g:D_p(s,r)\times\mathcal{O}_*\to\mathbb{C}^n,$ define
$$|g|_{p,s,r,\mathcal{O}_*}=\sup\limits_{\xi\in\mathcal{O}_*,|y|<r^2,\|z\|_p<r,\|\bar{z}\|_p<r}\sqrt{\sum\limits_{k\in\mathbb{Z}^n}|\hat{g}(k;y,z,\bar{z};\xi)|^2 e^{2|k|s}}.$$
For a vector field $$U=(X,Y,Z,\bar{Z}):D_p(s,r)\times\mathcal{O}_*\to \mathcal{P}^q,$$
define its form in the following manner,
\begin{equation}\label{ee49}
  \Arrowvert{U}\Arrowvert_{q,D_p(s,r)\times\mathcal{O}_*}=\sqrt{|X|_{p,s,r,\mathcal{O}_*}^2+|Y|_{p,s,r,\mathcal{O}_*}^2+\Arrowvert{Z}\Arrowvert_{p,q,s,r,\mathcal{O}_*}^2+\Arrowvert{\bar{Z}}\Arrowvert_{p,q,s,r,\mathcal{O}_*}^2}.
\end{equation}
For a scalar complex function defined on $D_{p}(s_0,r_0)\times\mathcal{O}_*,$
$$g(x,y,z,\bar{z};\xi)=\sum\limits_{k\in\mathbb{Z}^n,\gamma\in\mathbb{Z}_+^n,\alpha,\beta\in\mathbb{Z}_+^{\mathbb{Z}}}g_{k,\gamma,\alpha,\beta}(\xi)e^{i(k,x)}y^{\gamma}z^{\alpha}\bar{z}^{\beta},$$
define its modulus as follows:
$$\lfloor g\rceil=\lfloor g(x,y,z,\bar{z};\xi)\rceil=\sum\limits_{k\in\mathbb{Z}^n,\gamma\in\mathbb{Z}_+^n,\alpha,\beta\in\mathbb{Z}_+^{\mathbb{Z}}}|g_{k,\gamma,\alpha,\beta}(\xi)|e^{i(k,x)}y^{\gamma}z^{\alpha}\bar{z}^{\beta}.$$
For an operator or matrix $$g(x,y,z,\bar{z};\xi)=(g_{jk}(x,y,z,\bar{z};\xi)\in\mathbb{C}:j,k\in\mathbb{Z}\text{ or a subset of }\mathbb{Z}),$$
define
$$\lfloor g(x,y,z,\bar{z};\xi)\rceil=(|g_{jk}(x,y,z,\bar{z};\xi)|:j,k\in\mathbb{Z}\text{ or a subset of }\mathbb{Z}).$$

\begin{description}
\item[Assumption A]:(Non-degeneracy.) Suppose that $\omega^0(\xi):\mathcal{O}\subset\mathbb{R}^n\to\mathbb{Z}^n$ is real continuously differentiable in $\xi\in\mathcal{O}$ in the sense of Whitney. Suppose there exist two positive constants $c_1, c_2$ fulfilling
\begin{align}\begin{split}
&\inf\limits_{\xi\in\mathcal{O}}|\det \partial_{\xi}\omega^0(\xi)|\ge c_1,\\
&\sup\limits_{\xi\in\mathcal{O}}|\det\partial_{\xi}\omega^0(\xi)|\le c_2.\\
\end{split}
\end{align}
\item [Assumption B]:(The normal frequencies clustering at the origin.)Suppose $\Omega_j=\Omega_j(\xi)$ are real and continuously differentiable in $\xi\in\mathcal{O}$. Assume that
there are four absolute positive constants $c_{11},c_{12},c_{13},\kappa$ such that
\begin{align}
c_{11}|j|^{-\kappa}\le \Omega_j(\xi)\le c_{12}|j|^{-\kappa},\forall \xi\in\mathcal{O}, j\in\mathbb{Z},
\end{align}
and
\begin{align}
\sup\limits_{\xi\in\mathcal{O}}|\partial_{\xi}\Omega_j(\xi)|\le c_{13}|j|^{-\kappa}, j\in\mathbb{Z}.
\end{align}
Assume further for each $k\in\mathbb{Z}^n\subset\{0\}, j,k\in\mathbb{Z},$
\begin{align}
\frac{d^*}{d\omega}\biggl[(k,\omega)\pm\Omega_j\biggr]>0,\quad \frac{d^*}{d\omega}\biggl[(k,\omega)\pm(\Omega_j+\Omega_k)\biggr]>0,
\end{align}
in which $\Omega_j=\Omega_j(\xi(\omega))=\Omega_j((\omega^0)^{-1}(\omega))$ and $\frac{d^*}{d\omega}$ represents the directional derivative along the direction such that $\frac{d^*}{d\omega}(k,\omega)\ge 0.$
\item[Assumption C]:(Regularity.) Suppose that the perturbation term $P(x,y,z,\bar{z};\xi)$ defined on the domain $D_p(s_0,r_0)\times\mathcal{O}$ is analytic in the spatial coordinate and $C^1-$ smooth in $\xi$ of the parameter $\xi\in\mathcal{O},$ and for every $\xi\in\mathcal{O}$, the modulus $\lfloor X_P\rceil$ of its Hamiltonian vector field
 $X_P=(P_y,-P_x,i\partial_{\bar{z}}P,-i\partial_{z}P)$ determines an analytic map
 $$\lfloor X_P\rceil:D_p(s_0,r_0)\subset\mathcal{P}^p\to\mathcal{P}^q,$$
 satisfying
 $$\Arrowvert \lfloor X_P\rceil\Arrowvert_{q,D_{p}(s_0,r_0)\times\mathcal{O}}\le C\epsilon,\quad \Arrowvert \lfloor \partial_{\xi}X_P\rceil\Arrowvert_{q,D_p(s_0,r_0)\times\mathcal{O}}\le C\epsilon^{\frac{1}{2}}.$$
 \item [Assumption D]:(Reality.) The Hamiltonian functions $N(x,y,z,\bar{z};\xi)$ and $P(x,y,z,\bar{z};\xi)$ are real when $x,y\in\mathbb{R}.$
 \item[Assumption E]: For any $\xi\in\mathcal{O}$, the modulus of the operator $B_0$ is small with respect to $q=p+\kappa$ in the following way:
 $$\sup\limits_{\xi\in\mathcal{O}}\|\lfloor B^0(\xi)\rceil\|_{h_p\to h_q}\le C\epsilon,\quad \sup\limits_{\xi\in\mathcal{O}}\|\lfloor\partial_{\xi}B^0(\xi)\rceil\|_{h_p\to h_q}\le C\epsilon.$$
\end{description}
\begin{theorem}\label{thmA}(\cite{Yuan2018})
Assume that the Hamiltonian $H=N+P$ fulfills the Assumptions (A-E). Then there exists a small $\epsilon^*=\epsilon^*(n,p,q,\mathcal{O})>0$, such that for any $0<\epsilon_0<\epsilon^*,$ there exists a subset $\mathcal{O}_*\subset\mathcal{O}$ and a $\gamma=\gamma(\epsilon)$ satisfying
\begin{align}\label{c31}
\text{Meas}\mathcal{O}_*\ge\text{Meas}{O}(1-O(\gamma(\epsilon))),\quad \lim\limits_{\epsilon_0\to 0}\gamma(\epsilon)=0,
\end{align}
and there also exist a symplectic transformation
\begin{align}\label{c32}
\Phi:D_p(\frac{s_0}{2},\frac{r_0}{2})\times\mathcal{O}_*\subset\mathcal{P}^p\to D_p(s_0,r_0)\times\mathcal{O}\subset\mathcal{P}^q,
\end{align}
such that $H=N+P$ is changed into
\begin{align}\label{c33}
H^\infty=H\circ\Phi=(\omega(\xi),y)+\sum\limits_{j\in\mathbb{Z}}\Omega_j(\xi)z_j\bar{z}_j+\langle B^\infty(\xi)z,\bar{z}\rangle+R,
\end{align}
where
\begin{align}\label{c34}
R=O(|y|^2+|y|\|z\|_p+\|z\|_p^3),
\end{align}
\begin{align}\label{c35}
\|\lfloor X_R\rceil\|_{q,D_p(\frac{s_0}{2},\frac{r_0}{2})\times{\mathcal{O}_*}}\le C\epsilon,\quad \|\lfloor \partial_{\xi}X_R\rceil\|_{q,D_{p}(\frac{s_0}{2},\frac{r_0}{2})\times\mathcal{O_*}}\le C\epsilon^{\frac{1}{2}},
\end{align}
and for any $\xi\in\mathcal{O}_*,$ the operator $B^\infty(\xi)$ fulfills
\begin{align}\label{c36}
\|\lfloor B^\infty(\xi)-B^0(\xi)\rceil\|_{h_p\to h_q}\le C\epsilon,\quad \|\lfloor\partial_{\xi}(B^{\infty}(\xi)-B^0(\xi))\rceil\|_{h_p\to h_q}\le C\epsilon,
\end{align}
and
$\omega:\mathcal{O}_*\to\mathbb{R}^n$ with
\begin{align}\label{c37}
\sup\limits_{\xi\in\mathcal{O}_*}|\omega-\omega^0|\le C\epsilon,\quad \sup\limits_{\xi\in\mathcal{O}_*}|\partial_{\xi}(B^\infty(\xi)-B^0(\xi))|\le C\epsilon^{\frac{1}{2}}.
\end{align}

\end{theorem}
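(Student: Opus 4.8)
The plan is to prove Theorem~\ref{thmA} by a KAM (Newton-type) iteration that constructs $\Phi$ as the infinite composition of a sequence of symplectic maps, each of which quadratically reduces the size of the perturbation. I would fix a geometrically shrinking sequence of domains $D_p(s_\nu,r_\nu)$ with $s_\nu\downarrow s_0/2$, $r_\nu\downarrow r_0/2$, a truncation order $K_\nu\approx|\log\epsilon_\nu|$, and sizes $\epsilon_{\nu+1}\approx\epsilon_\nu^{3/2}$, and maintain inductively a Hamiltonian $H_\nu=N_\nu+P_\nu$ in which $N_\nu=\langle\omega^\nu(\xi),y\rangle+\sum_j\Omega_j(\xi)z_j\bar z_j+\langle B^\nu(\xi)z,\bar z\rangle$ is an integrable normal form and $\|\lfloor X_{P_\nu}\rceil\|_q\le\epsilon_\nu$, $\|\lfloor\partial_\xi X_{P_\nu}\rceil\|_q\le\epsilon_\nu^{1/2}$, starting from $H_0=H$, $\epsilon_0=\epsilon$. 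Assumption C provides the base case.

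\textbf{The single KAM step.} At step $\nu$ I split $P_\nu=R_\nu+(P_\nu-R_\nu)$, where $R_\nu$ collects the part of $P_\nu$ that is affine in $y$, at most quadratic in $(z,\bar z)$, and of Fourier degree $|k|\le K_\nu$ in $x$; analyticity makes the discarded high-order/high-frequency remainder already $O(\epsilon_\nu^{3/2})$. I then solve the homological equation $\{N_\nu,F_\nu\}+R_\nu=[R_\nu]$, where $[R_\nu]$ is the resonant part retained in the new normal form (the $x$-average affine-in-$y$ piece, updating $\omega^\nu\to\omega^{\nu+1}$, together with the near-resonant $z_j\bar z_l$ terms, updating $B^\nu\to B^{\nu+1}$). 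In Fourier--Taylor coefficients this requires dividing by $\langle k,\omega^\nu\rangle$, by the first Melnikov divisors $\langle k,\omega^\nu\rangle\pm\Omega_j$, and by the second Melnikov divisors $\langle k,\omega^\nu\rangle\pm(\Omega_j+\Omega_l)$ and $\langle k,\omega^\nu\rangle\pm(\Omega_j-\Omega_l)$. Setting $\Phi_\nu=X_{F_\nu}^1$, the conjugated Hamiltonian is $H_{\nu+1}=N_{\nu+1}+P_{\nu+1}$ with $P_{\nu+1}=\int_0^1(1-t)\{\{N_\nu,F_\nu\},F_\nu\}\circ X_{F_\nu}^t\,dt+\int_0^1\{P_\nu-R_\nu+[R_\nu],F_\nu\}\circ X_{F_\nu}^t\,dt$, and the two integrals are each $O(\epsilon_\nu^2)$ on $D_p(s_{\nu+1},r_{\nu+1})$, giving quadratic convergence; the $\partial_\xi$-estimates follow by differentiating the same identities.

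\textbf{Measure and small divisors (the main obstacle).} Because $\Omega_j\to 0$ as $|j|\to\infty$ (the finite limit-point), the first Melnikov divisors $\langle k,\omega\rangle\pm\Omega_j$ are small for infinitely many $j$ at each fixed $k$, unlike the classical setting in which $|\Omega_j|\approx|j|^\kappa$ makes all but finitely many of them large; likewise infinitely many differences $\Omega_j-\Omega_l$ cluster near $0$, so the near-resonant $z_j\bar z_l$ terms cannot all be removed and must be collected into the genuinely non-diagonal block $B^{\nu+1}$ (whence the $\langle B^\infty z,\bar z\rangle$ term in \ref{c33} and the estimate \ref{c36}). Two devices make this manageable. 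First, the smoothing $X_{P_\nu}:h_p\to h_q$ with $q=p+\kappa$ is tuned to the decay $\Omega_j\approx|j|^{-\kappa}$: dividing the $h_q$-valued coefficients by $\Omega_j\approx|j|^{-\kappa}$ costs exactly $\kappa$ derivatives, so $X_{F_\nu}$ still maps $h_p\to h_p$ and $\Phi_\nu$ is a well-defined symplectomorphism of $D_p$. Second, the directional-derivative hypotheses of Assumption B give transversality: along the direction in which $\langle k,\omega\rangle$ is non-decreasing the derivative of $\langle k,\omega\rangle\pm\Omega_j$ is strictly positive, so that, after changing variables from $\xi$ to $\omega$ via the non-degeneracy of Assumption A, each bad set $\{\xi:|\langle k,\omega^\nu(\xi)\rangle\pm\Omega_j(\xi)|<\gamma_\nu\langle k\rangle^{-\tau}\}$ has measure $\le C\gamma_\nu\langle k\rangle^{-\tau}$. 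Since $\Omega_j$ clusters at $0$, these sets accumulate on the single hyperplane $\langle k,\omega\rangle=0$ and, using $|\partial_\xi\Omega_j|\le c_{13}|j|^{-\kappa}$ with $c_{13}$ small, are summable in $j$; summing over $|k|\le K_\nu$ and over $\nu$ yields the excised set of total measure $O(\gamma(\epsilon))$ with $\gamma(\epsilon)\to 0$, which is \ref{c31}.

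\textbf{Convergence.} On $D_p(s_0/2,r_0/2)\subset\bigcap_\nu D_p(s_\nu,r_\nu)$ and for $\xi\in\mathcal{O}_*:=\bigcap_\nu\mathcal{O}_\nu$ the composition $\Phi=\lim_\nu\Phi_0\circ\cdots\circ\Phi_\nu$ converges in the $\|\cdot\|_q$-norm because $\|\Phi_\nu-\mathrm{id}\|_q\le C\epsilon_\nu$ is super-exponentially small, yielding the map \ref{c32}. The frequencies $\omega^\nu\to\omega$ and blocks $B^\nu\to B^\infty$ converge with the rates recorded in \ref{c36} and \ref{c37}, while the limiting perturbation $R$ retains the form \ref{c34} and the vector-field bound \ref{c35}; reality (Assumption D) and the smallness of $B^0$ (Assumption E) are preserved at every step. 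Thus $H^\infty=H\circ\Phi$ has the asserted normal form. The hardest and most delicate part throughout is the uniform-in-$\nu$ control of the clustering first and second Melnikov conditions, i.e. the simultaneous gain--loss balance of the $\kappa$ derivatives and the summability of the excised measures over the infinitely many near-resonant indices.
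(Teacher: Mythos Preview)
The paper does not prove Theorem~\ref{thmA}: it is stated in the Appendix purely as a quotation of the main result of \cite{Yuan2018}, with no proof or sketch given here. So there is no ``paper's own proof'' to compare your proposal against; the authors simply invoke the theorem as a black box and devote all of their effort to verifying Assumptions A--E for the gBBM Hamiltonian (Section~4).

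Your sketch is a reasonable high-level outline of the KAM iteration one expects behind such a result, and you correctly identify the two genuinely new features relative to classical KAM: (i) the smoothing $X_P:h_p\to h_q$ with $q=p+\kappa$ exactly compensates division by $\Omega_j\approx|j|^{-\kappa}$, and (ii) the non-diagonal block $B^\nu$ must absorb the unremovable near-resonant $z_j\bar z_l$ terms because the differences $\Omega_j-\Omega_l$ cluster at $0$. That said, your measure-estimate paragraph is where a referee would push back: the claim that the first-Melnikov bad sets ``are summable in $j$'' because $|\partial_\xi\Omega_j|\le c_{13}|j|^{-\kappa}$ with $c_{13}$ small is the crux of \cite{Yuan2018} and is considerably more delicate than a one-line transversality argument. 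For fixed $k$ the sets $\{|\langle k,\omega\rangle\pm\Omega_j|<\gamma\langle k\rangle^{-\tau}\}$ are nested intervals around $\langle k,\omega\rangle=0$ for all large $|j|$, so their union has measure comparable to the largest of them rather than to their sum, but making this precise uniformly in $\nu$ and controlling the second-Melnikov family simultaneously requires the specific machinery developed in \cite{Yuan2018}. If you intend this as a standalone proof rather than a pointer to that reference, that step needs substantial expansion.
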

\section*{Acknowlegement}
The authors are grateful to Professor Yuan Xiaoping in Fudan University for his invaluable encouragement and suggestions.

 \begin{center}
           
             \end{center}

\end{document}